    \newtheorem{theorem}{Theorem}[section]
    \newtheorem{lemma}[theorem]{Lemma}
    \newtheorem{proposition}[theorem]{Proposition}
    \newtheorem{definition}[theorem]{Definition}
    \newtheorem{remark}[theorem]{Remark}
\begin{document}
\newcommand{\A}{\mathbb{A}}
\newcommand{\Q}{\mathbb{Q}}
\newcommand{\N}{\mathbb{N}}
\newcommand{\Z}{\mathbb{Z}}
\newcommand{\R}{\mathbb{R}}
\newcommand{\M}{\mathbb{M}}
\newcommand{\tx}[1]{\quad\mbox{#1}\quad}
\newcommand{\Aut}{\mathrm{Aut}}
\newcommand{\Inn}{\mathrm{Inn}}
\newcommand{\Sym}{\mathrm{Sym}}
\newcommand{\LSec}{\mathrm{LSec}}
\newcommand{\RSec}{\mathrm{RSec}}
\newcommand{\di}{\mathrm{d}}

\newcommand{\giu}{\textcolor{violet}}
\newcommand{\nic}[1]{\textcolor{blue}{#1}}

\begin{frontmatter}
\title{{\bf{Extinction and Persistence in a Stochastic Mpox Model with Hawkes-type Self-Excitation
}
}\tnoteref{label1}}\tnotetext[label1]{}
\author[nor,nhh]{Giulia Di Nunno}
\ead{giulian@math.uio.no}
\author[it]{Nicola Giordano}
\ead{ngiordano@unisa.it}
\author[it]{Barbara Martinucci}
\ead{bmartinucci@unisa.it}
\author[nor,kpi]{Olena Tymoshenko}
\ead{otymoshenkokpi@gmail.com}


\address[nor]{Department of Mathematics, University of Oslo, Norway}
\address[nhh]{Department of Business and Management Science, NHH Norwegian School of Economics, Norway}
\address[it]{Department of Mathematics, University of Salerno}
\address[kpi]{Department of Mathematical Analysis  and Probability Theory, NTUU Igor Sikorsky Kyiv Polytechnic Institute, Ukraine}

\begin{abstract}
We develop a stochastic human-rodent compartment model for Mpox transmission that combines diffusion noise with Hawkes self-exciting jumps in the human infection dynamics. Including Hawkes processes allows, for instance, to model the short but significant spikes in transmission happening after crowded events. For the coupled human-rodent system, we prove global existence, uniqueness and positivity of solutions, derive a basic reproduction number 
$R_0$ that guarantees almost sure extinction when 
$R_0<1$, and obtain explicit persistence-in-the-mean conditions for both infected rodents and humans, which define persistence thresholds for the joint dynamics. Numerical experiments show how clustered human transmission events, environmental variability and control measures shift these thresholds and shape the frequency and size of Mpox outbreaks.

\end{abstract}
\begin{keyword}
Monkeypox virus, stochastic epidemic model, Hawkes process, self-exciting jumps, extinction, persistence.\\
\emph{2020 MSC:} 60H10, 60G55, 60J75, 92D30 
\end{keyword}

\end{frontmatter}
\section{Introduction}

The transmission of infectious diseases is a multi-scale process shaped by biology, behavior and environment. Stochastic differential equations (SDEs) are widely used to capture uncertainties arising from heterogeneity and chance events; unlike deterministic models, SDEs explicitly incorporate random fluctuations and thus provide a more realistic lens on outbreak variability \cite{m1,m2}. Relatedly, ecological population dynamics have long been analyzed with time-series methods \cite{mid1} and stochastic calculus tools \cite{mid2,Agarwal,ghaus3}, yielding insights that translate naturally to host-pathogen systems.
Mpox (monkeypox) is a zoonosis caused by an Orthopoxvirus \cite{Durski,Jezek}. Following the eradication of smallpox, Mpox has become the most prevalent orthopoxvirus infection in humans \cite{Kantele}. Rodents are considered the primary reservoir, with spillover to humans occurring directly or indirectly; primates and other wildlife can contribute to maintenance and spread. Human infection arises through contact with infected animals or people, respiratory droplets, contaminated fluids, or lesion material \cite{Alakunle}. 

The 2022--2024 global Mpox outbreak underscores these challenges. As of 31 October 2024, the World Health Organization reported 115{,}101 confirmed cases (plus 2 probable) and 255 deaths across 126 Member States, with the most recent complete month's reports concentrated in the African Region (71\%) and Western Pacific Region (11.6\%). Cumulatively, the United States, Brazil, the Democratic Republic of the Congo, Spain, France, Colombia, Mexico, the United Kingdom, Germany, and Peru account for approximately 79\% of cases, highlighting marked geographic heterogeneity (WHO, 31 Oct 2024,  \cite{CDC2024}, \cite{who-mpox}).

To date, several studies have addressed the modeling of Mpox transmission using various stochastic and statistical approaches. For instance, one study employed a Bayesian Directed Acyclic Graphic Model to estimate the global spread of the infection. A recent work by Rahman et al.~\cite{Rahman2025} has studied Mpox transmission under stochastic influences, showing how demographic and biological factors interact with randomness.
Their analysis established threshold conditions for disease persistence and extinction, and proved the existence of a unique global nonnegative solution. Another work \cite{MonkeypoxJump} utilized stochastic differential equations of the It\^o-L\'evy type to describe the 2022 outbreak, accounting for cross-species transmission between animals and humans. In \cite{midNew2} analyses of trends and case forecasts have been performed using ARIMA models and join point regression methods. However, despite these advances, the clustered and self-exciting nature of Mpox transmission was not yet included in the modeling. In this work, we directly address this issue and we propose to add a Hawkes  component besides the diffusion in the dynamics of human population. Indeed, the application of self-exciting process models appears to be well-justified and, to date, it is the first time Hawkes processes are used in the modeling transmission of Mpox.

In this study, we use Hawkes processes to model jumps in the human population. Unlike classical Poisson jumps, Hawkes processes are self-exciting: when one event occurs, it temporarily increases the probability of new events happening soon after. This property is important for modeling Mpox because the disease can show clustered outbreaks, where one infection can lead to a sudden increase in new cases. This often happens during mass gatherings, such as concerts, festivals, football matches, or crowded indoor events, where the number of close contacts rises sharply. Such situations can cause short but significant spikes in transmission.
Therefore, the self-exciting mechanism of Hawkes processes provides a more realistic description of how Mpox spreads during these events. In contrast, models with Poisson jumps cannot account for the influence of past events on future transmission. 
In addition, we report that Hawkes processes have been applied in the modeling of diseases with similar clustered patterns of spread. For example, the study by Chiang et al.~\cite{Chiang2022} used a Hawkes model to describe COVID-19 transmission with mobility data while Garetto et al.~\cite{Garetto2021} employed a time-modulated Hawkes process to evaluate the impact of control measures. These results support the idea that self-exciting dynamics are suitable for modeling outbreak clustering in Mpox as well. 
We stress that our work differs from these two not only for the disease considered, but mostly for the scopes of the probabilistic analysis and the modeling choices of using the Hawkes process as a driving noise in the human population.

The inclusion of jumps in compartmental models allows to represent sudden, discontinuous and potentially rare events that significantly influence the epidemic dynamics. While Brownian  noise captures continuous and small fluctuations, jumps model non-continuous events in time.
%
%
%
We did not introduce jumps in the dynamics of the rodent population because, unlike the human population, there is no form of control or direct observability of social behavior for these.
Then the uncertainty associated with transmission between rodents is modeled exclusively with Brownian noise, sufficient to represent natural variability without having to introduce discontinuous jumps.

The human population is divided into four epidemiological compartments: susceptible individuals $S_h$, infected individuals $I_h$, isolated individuals $Q_h$, and recovered individuals $R_h$. The rodent population consists of susceptible rodents $S_r$ and infected rodents $I_r$.

\begin{figure}[!ht]\label{fig}
\center{\includegraphics[scale=0.82]{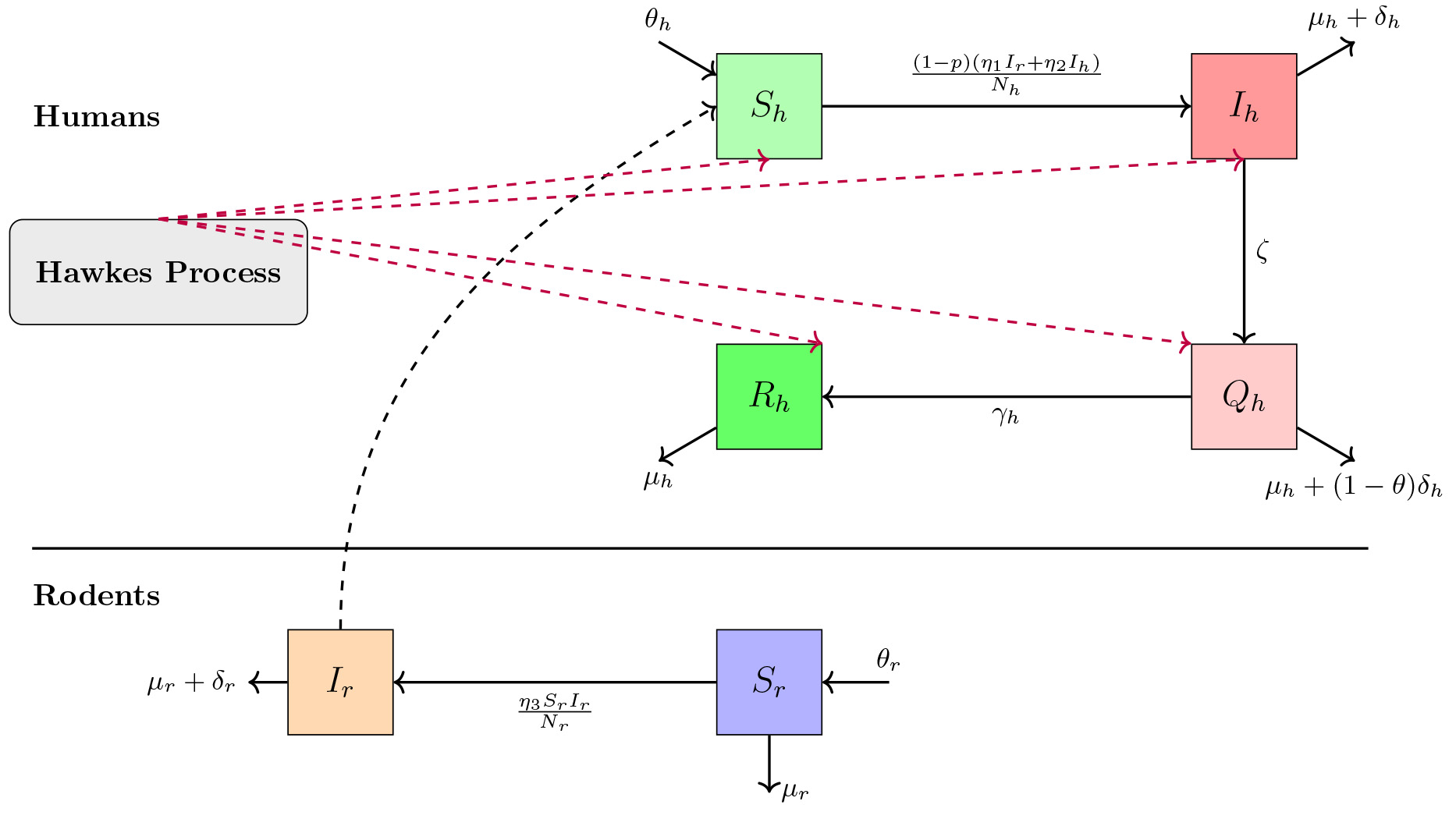}}
\caption{Schematic diagram of Mpox virus transmission}
\end{figure}

To incorporate random environmental fluctuations and uncertainties, such as unpredictable changes in birth/death rates and inaccuracies in counting individuals, we extend the deterministic model by introducing stochastic perturbations via Brownian motions and jump processes.
Specifically, we consider the following stochastic differential system for Mpox dynamics, with human compartments $(S_h, I_h, Q_h, R_h)$ and rodent compartments $(S_r, I_r)$:

\begin{align}\label{System}
d S_{h}(t)=&\left(\theta_{h}-(1-p)\cdot\frac{\eta_1I_r(t-)+\eta_2I_h(t-)}{N_h(t-)}S_{h}(t-)-\mu_{h}S_{h}(t-)\right)dt\notag\\
&-(1-p)\dfrac{\sigma_{1}I_{r}(t-)d B_{1}(t)+\sigma_{2}I_{h}(t-)dB_{2}(t)}{N_{h}(t-)}S_{h}(t-)+\sigma_{3}S_{h}(t-)dB_{3}(t)\notag\\
&+\int_{\mathbb{R}}\epsilon_{1}(y)S_{h}(t-)H_{1}(dt,dy)\notag\\
d I_{h}(t)=&\left((1-p)\cdot\frac{\eta_1I_r(t-)+\eta_2I_h(t-)}{N_h(t-)}S_{h}(t-)-(\mu_{h}+\delta_{h}+\zeta)I_{h}(t-)\right)dt\notag\\
&+(1-p)\dfrac{\sigma_{2}S_{h}(t-)}{N_{h}(t-)}I_{h}(t-)d B_{2}(t)+\sigma_{4}I_{h}(t-)d B_{4}(t)\notag\\
&+\int_{\mathbb{R}}\epsilon_{2}(y)I_{h}(t-)H_{2}(dt,dy)\notag\\
d Q_{h}(t)=&\left(\zeta I_{h}(t-)-(\mu_{h}+\gamma_{h}+(1-\theta)\delta_{h})Q_{h}(t-)\right)dt\notag\\
&+\sigma_{5}Q_{h}(t-)d B_{5}(t)+\int_{\mathbb{R}}\epsilon_{3}(y)Q_{h}(t-)H_{3}(d t,d y)\notag\\
d R_{h}(t)=&\left(\gamma_{h}Q_{h}(t-)-\mu_{h}R_{h}(t-)\right)dt\notag\\
&+\sigma_{6}R_{h}(t-)d B_{6}(t)+\int_{\mathbb{R}}\epsilon_{4}(y)R_{h}(t-)H_{4}(dt,dy)\notag\\
d S_{r}(t)=&\left(\theta_{r}-\frac{\eta_{3}S_{r}(t)I_{r}(t)}{N_{r}(t)}-\mu_{r}S_{r}(t)\right)dt + \sigma_{7}S_{r}(t)d B_{7}(t)\notag\\
d I_{r}(t)=&\left(\frac{\eta_{3}S_{r}(t)I_{r}(t)}{N_{r}(t)}-(\mu_{r}+\delta_{r})I_{r}(t)\right)dt + \sigma_{8}I_{r}(t)d B_{8}(t)
\end{align}
We assume that $(S_h(0), I_h(0), Q_h(0), R_h(0), S_r(0), I_r(0)) \in \mathbb{R}_+^6$; $B_i(t)$ for $i=1,\dots,8$ are mutually independent standard Brownian motions; $\sigma_i$ are volatility coefficients representing the intensity of environmental noise;
$H_i(dt, dy)$ are mutually independent Hawkes processes modeling sudden jumps in the system and $\epsilon_i$ are the jumps' sizes. Moreover, Brownian motions and Hawkes processes are independent of each other.
We define the total human and rodent populations as
\[
N_h(t) := S_h(t) + I_h(t) + Q_h(t) + R_h(t), \quad N_r(t) := S_r(t) + I_r(t),
\]
respectively. Additionally, we assume the following conditions 
\begin{eqnarray}
&& \hspace*{0.1cm}
N_h(t)\leq M,
\label{BTP}
\\
&& \hspace*{0.1cm}
N_r(t) \leq k(t)N_h(t),
\label{HR}
\end{eqnarray}
where $k(t)$ is a positive, continuous function.
Biologically, \eqref{BTP} reflects that the human population evolves in a bounded habitat with limited resources, so that its size cannot grow beyond a regional carrying capacity $M$. 
The inequality \eqref{HR} expresses that the rodent population is uniformly controlled by the human population: synanthropic rodents depend on human food and shelter, and their abundance cannot become arbitrarily larger than $N_h(t)$. 
The factor $k(t)$ is allowed to vary in time to account for seasonal or environmental fluctuations in rodent abundance relative to humans (for example, changes in climate or food availability). 
Analytically, the bounds \eqref{BTP}-\eqref{HR} are used below to control the drift and jump coefficients and to obtain uniform moment estimates for the stochastic system.

\begin{figure}[t]
 \centering
\includegraphics[width=1.1\textwidth]{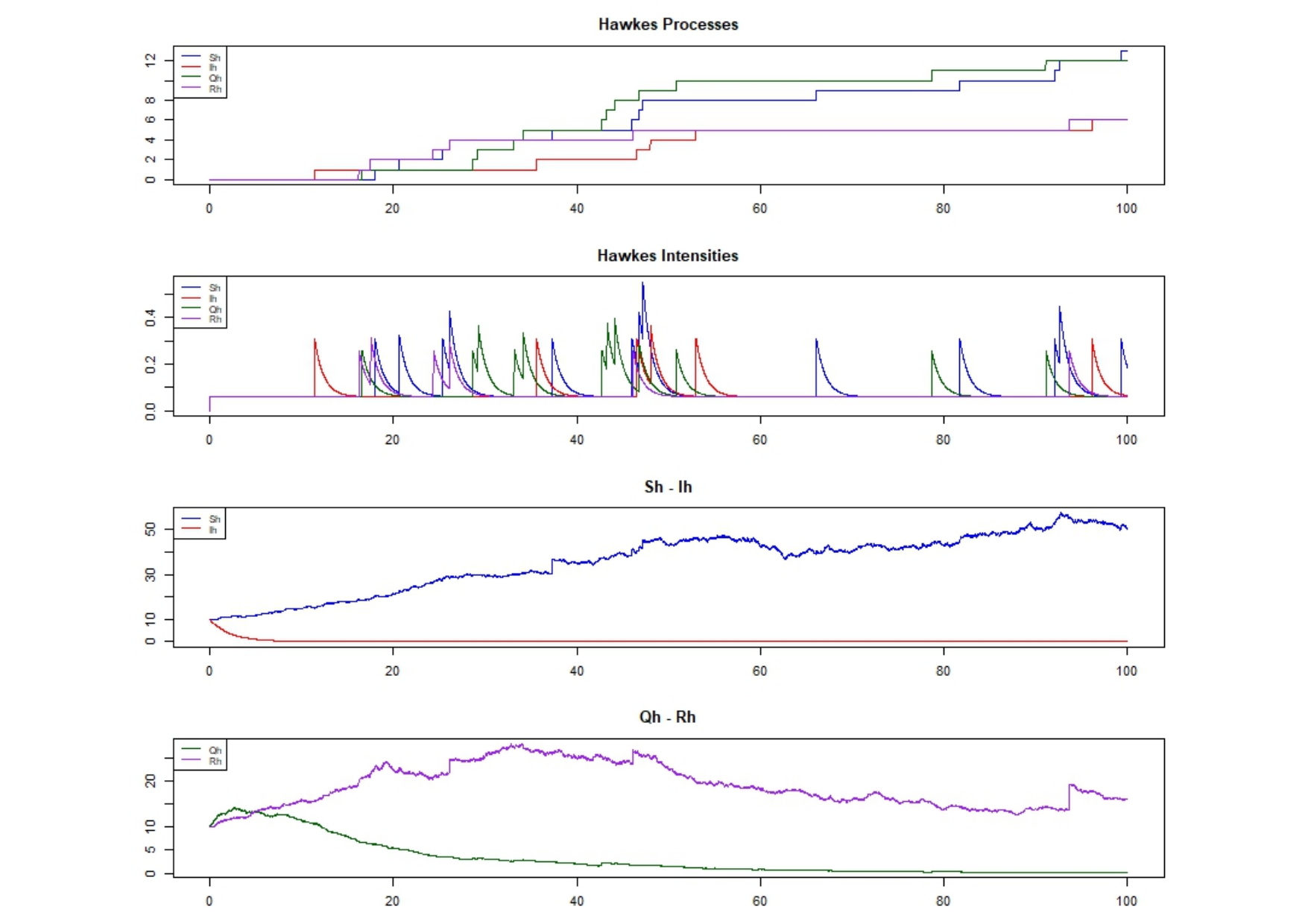}
\caption{The first two graphs show some sample paths of linear Hawkes processes with exponential kernels and their corresponding conditional intensity function. The last two graphs display the plots of the associated variables.}
\label{fig:simulation}
\end{figure}

The stochastic jumps are governed by a compensated Hawkes processes $\tilde{H}_i$, which capture self-exciting dynamics,  that is, the likelihood of future jumps increases after an initial jump. This structure is biologically relevant for modeling clustering effects such as super-spreader events or localized outbreaks caused by one infectious contact. Unlike memoryless Poisson jumps, Hawkes processes introduce temporal dependence and better reflect real-world epidemic surges. 

Note in particular that the rate of contacts, $\eta_1$ and  $\eta_2$, are the basic parameters that are responsible for Mpox disease's transmission. 
\noindent The susceptible human class  $S_h$ is generated by a constant recruitment rate $\theta_h$ and reduced by acquiring infection after interaction with infectious humans and rodents. The force of infection is $\eta_1 I_r+\eta_2 I_h$, where $\eta_1$ and $\eta_2$ are the effective contact rates (capable of transmitting infection) corresponding to infectious rodents and humans, respectively. The mortality rate of each class within the human population decreases over time due to natural deaths occurring at a rate of $\mu_h$. Meanwhile, the death rate caused by Mpox in the infected and isolated human compartments is indicated by $\delta_h$. The infectious human population is isolated and joins the $Q_h$ class at the transmission rate $\zeta$. The recovery rate of the isolated/quarantine human population is denoted by $\gamma_h$.  The value $p$, with $0\leq p\leq 1$, measures the effectiveness of enlightenment campaign,  and $\theta$, with $0 \leq \theta \leq 1$, is the effectiveness of quarantine and
treatment. It is assumed that the death in $Q_h$ due to disease is influenced by the effectiveness of treatment, hence it is $(1 - \theta) \delta_h$ .

In the rodent population, $\theta_r$ is the recruitment rate and $\mu_r$ the natural death rate. Virus transmission between rodents occurs at rate $\eta_3 I_r$, where $\eta_3$ is the transmission coefficient. All rodent classes experience natural mortality at rate $\mu_r$, and infected rodents have an additional Mpox-induced death rate $\delta_r$. Since wild rodents typically have no access to treatment, we assume they do not recover. Model components are summarized in Table \ref{Table 1} (excluding volatilities), and parameter values used in the numerical experiments are taken from Tables \ref{model_params_main}$-$\ref{tab:hawkes_params}.

\begin{table}[H]
  \centering
  \scriptsize
  \renewcommand{\arraystretch}{1.05}
  \begin{tabular}{ll}
    \toprule
    \textbf{Symbol} & \textbf{Description} \\
    \midrule
    $N_h$ & Total human population \\
    $S_h$ & Susceptible humans \\
    $I_h$ & Infected humans \\
    $Q_h$ & Quarantined humans \\
    $R_h$ & Recovered humans \\
    $N_r$ & Total rodent population \\
    $S_r$ & Susceptible rodents \\
    $I_r$ & Infected rodents \\
    $\theta_h$ & Human recruitment rate \\
    $p$ & Public enlightenment campaign effectiveness \\
    $\eta_1$ & Rodent-human contact rate \\
    $\eta_2$ & Human-human contact rate \\
    $\eta_3$ & Rodent-rodent contact rate \\
    $\mu_h$ & Natural human death rate \\
    $\delta_h$ & Disease-induced human death rate \\
    $\zeta$ & Infected-to-quarantine rate \\
    $\gamma_h$ & Recovery rate of quarantined humans \\
    $\theta$ & Quarantine/treatment effectiveness \\
    $\theta_r$ & Rodent recruitment rate \\
    $\mu_r$ & Natural rodent death rate \\
    $\delta_r$ & Disease-induced rodent death rate \\
    \bottomrule
  \end{tabular}
  \caption{Descriptions of the model elements}
 \label{Table 1}
\end{table}

Figure \ref{fig:simulation} provides (1) the cumulative number of arrivals of linear Hawkes processes with exponential kernels and branching parameter less than one; (2) the corresponding conditional intensity functions; (3) the sample paths of the variables $S_h(t)$ and $I_h(t)$; 
(4) the sample paths of the variables $Q_h(t)$ and $R_h(t)$. 


The paper is organized as follows. Section~2 introduces the Hawkes framework and the stochastic calculus with random measures used throughout the paper. Section~3 presents the full human-rodent model with diffusion noise and Hawkes jumps, together with results on global existence, uniqueness, and positivity of solutions to the stochastic dynamical system (\ref{System}). In Section~4, we derive the basic reproduction number and establish an extinction threshold.  This section contains numerical simulations illustrating the theoretical findings. Section~5 provides persistence-in-the-mean results for both human and rodent infections. 

\section{Hawkes Processes and Random Measures}

The stochastic model described in the previous section accounts for various sources of randomness, such as environmental fluctuations (via Brownian motion) and sudden, irregular events (via jump processes). 
We stress that the standard Poisson jump processes assume memoryless behavior, which may not sufficiently capture real-world dynamics, especially in epidemic modeling where jump events, such as infection spikes or super-spreader episodes, often exhibit temporal clustering and feedback mechanisms.

To address this limitation from the epidemiological modeling point of view, we incorporate \textit{Hawkes processes} as the driving mechanism behind the stochastic jumps. Hawkes processes are self-exciting point processes in which the occurrence of one jump increases the probability of subsequent jumps within a short time window. This is particularly relevant for modeling infectious diseases like Mpox, where a single infectious event (e.g., exposure during crowded gathering or rodent interaction in an urban cluster) may trigger further localized outbreaks.
Unlike standard Poisson processes, which treat each event as independent, Hawkes processes allow for self-excitation, where past events increase the likelihood of future events. 
%
%
%
The random jump terms in system \eqref{System} are therefore modeled using \emph{compensated Hawkes random measures}, which allow for the inclusion of memory effects and feedback into the stochastic dynamics. In this section, we present the mathematical background of Hawkes processes and the extension to Hawkes-driven random measures used in the model.
{\color{black}
\subsubsection{Point processes and Hawkes processes}
A simple point process is a stochastic process defined by random event times ${\bf T}={T_1,T_2,\ldots}\subset[0,\infty)$ such that
$P(0<T_1\le T_2\le \cdots)=1.$
Here, $T_i$ is the occurrence time of the $i$-th event.  The counting process associated with $\bf{T}$ is defined as 
$
H_t := \sum_{i \geq 1} \mathbb{I}_{\{t \geq T_i\}},
$ so that $H_t$ counts the number of events in the time interval $[0,t]$, with $H_0=0$. The counting process $H_t$ is a right-continuous step function 
and, due to the assumptions on $T_i$, it increases only in jumps of size $+1$. Note that the counting and point process terminology is usually interchangeable.
The random variable $T_{\infty}:=\sup_i T_i$ is called the explosion time of the point process. If $T_{\infty}=\infty$ almost surely, then 
$N_t$ is said to be nonexplosive. 
\par
Hereafter we provide the definition of univariate  Hawkes processes, introduced by Hawkes in 1971 (see \cite{Hoks}).

\begin{definition}\label{def:hawkes}
Let us consider a counting process $H_t$ and denote by $\mathcal{F}_t=\sigma\{H_s\,|\,0\leq s\leq t\}$ the canonical filtration of $H_t$.
The linear Hawkes process is a self-exciting process for which
$$
\mathbb{P}(H_{t+h}-H_t = n \,|\, \mathcal{F}_t)=\left\{
 \begin{array}{ll}
 1-  \lambda_t h + o(h) & n=0,\\
 \lambda_t h + o(h) & n=1,\\
o(h) & n>1,
\end{array}
 \right.
$$
where
\begin{equation}
	\lambda_t:=\lambda(t | \mathcal{F}_t) = \lim_{h \to 0} \frac{P[H_{t+h}-H_t=1\,|\,\mathcal {F}_t]}{h}
		\label{eq:intensity_def}
\end{equation}
is the (conditional) intensity function of $H_t$.
\par
\noindent
Given the observed event times $\{T_i\}_{i\in {\mathbb N}}$ on the 
interval $[0,t]$, the conditional intensity is 
\begin{equation}
\lambda_t =\lambda_0+ \alpha \int_{0}^t  \nu(t-s) {\rm d}H_s
=\lambda_0+\alpha \sum_{T_i<t} \nu(t-T_i),
\label{eq:hawkes}
\end{equation}
where $\lambda_0>0$ is the baseline intensity, $\alpha>0$ is the mark, and
$\nu: \R_{+}\rightarrow \R_{+}$ is the kernel. 
\end{definition}

\noindent
In general, such process is non-Markovian since its future evolution depends on the timing of past events. In addition, the Hawkes process is nonexplosive if $\int_{0}^t \nu(s) {\rm d}s<\infty$ (see \cite{Bacry2}), 
and it has asymptotically stationary increments (and $\lambda_t$ is asymptotically stationary) if the kernel satisfies the stationary condition $|\nu|_{{\mathcal L}_1}<1$ (see \cite{Bacry1}).
\par
Given a Hawkes process $H_t$,  the integrated conditional intensity function $\Lambda_t=\int_0^t \lambda_s {\rm d}s$ is  
the unique predictable, non-decreasing process such that $Y_t:=H_t-\Lambda_t$ is a local martingale with respect to the canonical filtration of the process $H_t$. 
The process $\Lambda_t$ is called the $\mathcal {F}_t$-compensator of $H_t$. 
%
\par
Various types of kernel functions $\nu(\cdot)$ can be employed in the formulation of Hawkes processes, including exponential, power-law, Gaussian, and piecewise-defined kernels. 
The choice of the kernel determines the memory and excitation structure of the process. A common choice for the excitation function is a decaying exponentially kernel $\nu(t) = \alpha e^{-\beta t}, \alpha, \beta > 0$, 
where $\alpha$ controls the intensity of excitation and $\beta$ determines the rate of decay. In this case, the conditional intensity, say $\lambda^{*}_t$, is given by
\begin{equation}
\lambda^{*}_t =\lambda_0+ \alpha \int_{0}^t  {\rm e}^{-\beta (t-s)} {\rm d}H_s
=\lambda_0+\alpha \sum_{T_i<t} {\rm e}^{-\beta (t-T_i)},
\label{eq:hawkes_exp}
\end{equation}
and describes a scenario in which the probability of experiencing consecutive events is maximal in the period directly after an initial event and gradually decreases as the time elapses. See the simulations in Figure \ref{fig:simulation}.
In this case, the conditional intensity process $\lambda^{*}_t$ and the vector $(H_t,\lambda^{*}_t)$ are both Markov processes (see, for instance \cite{Oakes}). 
Moreover, $\lambda^{*}_t$ satisfies the following stochastic differential equation
$$
{\rm d} \lambda^{*}_t =\beta (\lambda_0- \lambda^{*}_t) {\rm d} t+\alpha {\rm d}H_t,
$$ 
so that $\lambda^{*}_t$ is a mean-reverting process driven by its own point process. 

The exponential Hawkes process is stationary if and only if $\alpha/\beta < 1$ and this condition also prevents the jump intensity from explosion.  
Under such assumption, by taking the expectation of (\ref{eq:hawkes_exp}) and by using \( \mathbb{E}[dH_s] = \mathbb{E}[\lambda^{*}_s] \, {\rm d}s \), we obtain 
$$
\mathbb{E}[\lambda^{*}_t]=\frac{\lambda_0}{1-\alpha/\beta}+\frac{\lambda_0}{1-\beta/\alpha} {\rm e}^{-(\beta - \alpha)t},
$$
and
$$
\mathbb{E}[H_t] = \frac{\lambda_0 t}{1-\alpha/\beta}+\frac{\alpha \lambda_0}{(\alpha - \beta)^2} \left( {\rm e}^{-(\beta - \alpha)t}-1 \right).
$$

\subsection{Stochastic integrals with respect to Hawkes random measures and It\^o formula}

Let us consider a Hawkes random measure $H({\rm d}t,{\rm d}y)$ defined over $[0,T] \times \mathbb{R}^p$, with $p \geq 1$, 
and denote by $\lambda_t$ the corresponding intensity process.
This represents the jump behavior of the Hawkes process $H_t$.  Indeed, for any $s \leq t$ and $A$ measurable in $\mathbb{R}^p$, $H\big((s,t]\times A\big)$ is the number of jumps in the time interval $(s,t]$ of size in $A$ that are taken by the Hawkes process.
The compensated Hawkes random measure $\widetilde{H}({\rm d}t,{\rm d}y)$ is a purely discontinuous martingale difference random measure defined as 
\begin{equation}\label{Eq:Hawkes_rm}
   \widetilde{H}({\rm d}t,{\rm d}y) := H({\rm d}t,{\rm d}y) - \lambda_t\,{\rm d}t\,m({\rm d}y), 
\end{equation}
where $m$ is a probability measure on \( (\mathbb{R}^p, \mathcal{B}(\mathbb{R}^p))\) such that \( m(\{0\})=0 \), describing the jump size. Also, if the expectation exists,
$$E\big[ H\big((s,t]\times A \big] = \int_0^T \int_A d\Lambda_t\, m(dy) = \int_0^T \int_A \lambda_t \, dt \,  m(dy),
$$
with $\Lambda_t$ the predictable $\mathcal{F}_t$-compensator of $H_t$.
\par
For the definition of the stochastic integral with respect to Hawkes random measures, we consider 
the space \( L^2_{\mathcal{F},\lambda m}([0,T]\times\mathbb{R}^p) \), which denotes the set of \( \mathcal{F}_t \)-adapted random fields $\varphi$ such that
$$
\|\varphi\|^2_{L^2_{\mathcal{F},\lambda m}} := \mathbb{E}\left[\int_0^T \int_{\mathbb{R}^p} |\varphi(s,y)|^2 \lambda_s\,m({\rm d}y)\,{\rm d}s\right] < \infty.
$$
Following the approach described in \cite{bensoussan}, for any $\varphi \in L^2_{\mathcal{F},\lambda m}$, it is 
\begin{equation}
\int_0^T \int_{\mathbb{R}^p} \varphi(s,y)\,{H}({\rm d}s,{\rm d}y)=\int_0^T \int_{\mathbb{R}^p} \varphi(s,y) \lambda_s\,m({\rm d}y)\,{\rm d}s+\int_0^T \int_{\mathbb{R}^p} \varphi(s,y)\,\widetilde{H}({\rm d}s,{\rm d}y),
\label{intHaw}
\end{equation}
where, on the right-hand-side, the first term is an ordinary integral, whereas the second one is defined through the isometry
\[
\mathbb{E}\left[\left(\int_0^T \int_{\mathbb{R}^p} \varphi(s,y)\,\widetilde{H}({\rm d}s,{\rm d}y)\right)^2\right] = \mathbb{E}\left[\int_0^T \int_{\mathbb{R}^p} |\varphi(s,y)|^2 \lambda_s\,m({\rm d}y)\,{\rm d}s\right].
\]

Moreover, for a general stochastic process 
\begin{equation}
\label{eq:hawkes_ito_process}
\xi(t) = \xi(0) + \int_0^t \alpha(s)\,{\rm d}s + \int_0^t \beta(s) \cdot {\rm d}B(s) + \int_0^t \int_{\mathbb{R}^p} \varphi(s,y)\,H({\rm d}s,{\rm d}y),
\end{equation}
where $\varphi \in L^2_{\mathcal{F},\lambda m}((0,T) \times \mathbb{R}^p)$, $H$ is a Hawkes random measure, $B$ is a standard $n$-dimensional Brownian motion, and where $ \alpha \in L^2_{\mathcal{F}}(0,T))$ and 
$\beta \in L^2_{\mathcal{F}}((0,T);\mathbb{R}^n)$ are $\mathcal{F}_t$-adapted processes, the following It\^o Formula is proved in \cite{bensoussan}.

\begin{proposition}[It\^o Formula for Hawkes-It\^o Processes \cite{bensoussan}] 
\label{prop1}
Let $\Psi \in C^{2,1}(\mathbb{R} \times [0,T])$ be such that
$$
  \mathbb{E}\left[\int_0^T \left|\frac{\partial \Psi}{\partial x}(\xi(s),s)\right|^2 |\beta(s)|^2\,{\rm d}s\right] < \infty,
$$
where $|\beta(s)|$ denotes the Euclidean norm in $\mathbb{R}^n$, and
$$
\mathbb{E}\left[\int_0^T \int_{\mathbb{R}^p} \left|\Psi(\xi(s)+\varphi(s,y),s) - \Psi(\xi(s),s)\right|^2 \lambda_s\,m({\rm d}y)\,{\rm d}s\right] < \infty.
$$
Then, for all $t \in [0,T]$,
\begin{eqnarray}
&& \hspace*{-0.6cm}
\Psi(\xi(t),t) = \Psi(\xi(0),0)+ \int_0^t \frac{\partial \Psi}{\partial x}(\xi(s),s) \, \beta_s \cdot {\rm d}B(s)
\nonumber 
\\ 
&& \hspace*{-0.6cm}
+\int_0^t \left( \frac{\partial \Psi}{\partial s}(\xi(s),s) + \alpha(s) \frac{\partial \Psi}{\partial x}(\xi(s),s) + \frac{1}{2} \frac{\partial^2 \Psi}{\partial x^2}(\xi(s),s) |\beta(s)|^2 \right) {\rm d}s 
\nonumber 
\\
&& \hspace*{-0.6cm}
 + \int_0^t \int_{\mathbb{R}^p} \left[ \Psi(\xi(s)+\varphi(s,y),s) - \Psi(\xi(s),s) \right] H({\rm d}s,{\rm d}y).
\label{Ito1}
\end{eqnarray}
\end{proposition}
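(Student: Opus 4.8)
The plan is to exploit the finite-activity structure of the Hawkes random measure. Since the underlying Hawkes process is nonexplosive (guaranteed by $\int_0^t\nu(s)\,ds<\infty$), on any interval $[0,t]$ it has only finitely many atoms almost surely, so the jump integral $\int_0^t\int_{\mathbb{R}^p}\varphi(s,y)\,H(ds,dy)$ reduces pathwise to a \emph{finite} sum $\sum_{i:\,T_i\le t}\varphi(T_i,Y_i)$ over the jump times $T_i$ and their marks $Y_i$. Consequently $\xi$ is a jump-diffusion whose continuous part $\xi^c$ obeys $d\xi^c(s)=\alpha(s)\,ds+\beta(s)\cdot dB(s)$ and whose discontinuous part is a pure-jump process of finite variation. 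The proof then proceeds by an interlacing argument: I treat the evolution \emph{between} consecutive jumps with the classical continuous Itô formula, and insert the exact jump increments of $\Psi$ \emph{at} each jump time.

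Concretely, fix a path and enumerate the jump times on $[0,t]$ as $0=T_0<T_1<\dots<T_K\le t<T_{K+1}$. On each open interval $(T_{i-1},T_i)$ the process $\xi$ coincides with the continuous Itô process $\xi^c$, so the standard Itô formula gives
\begin{align*}
\Psi(\xi(T_i^-),T_i)-\Psi(\xi(T_{i-1}),T_{i-1})
&=\int_{T_{i-1}}^{T_i}\frac{\partial\Psi}{\partial x}(\xi(s),s)\,\beta(s)\cdot dB(s)\\
&\quad+\int_{T_{i-1}}^{T_i}\Big(\frac{\partial\Psi}{\partial s}+\alpha(s)\frac{\partial\Psi}{\partial x}+\tfrac12\frac{\partial^2\Psi}{\partial x^2}\,|\beta(s)|^2\Big)(\xi(s),s)\,ds.
\end{align*}
At the jump time $T_i$ the process moves from $\xi(T_i^-)$ to $\xi(T_i)=\xi(T_i^-)+\varphi(T_i,Y_i)$, producing the increment $\Psi(\xi(T_i^-)+\varphi(T_i,Y_i),T_i)-\Psi(\xi(T_i^-),T_i)$. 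Summing over $i$ and telescoping, the continuous contributions reassemble into the drift and Brownian integrals over $[0,t]$, while the jump contributions reassemble into $\sum_{i:\,T_i\le t}\big[\Psi(\xi(T_i^-)+\varphi(T_i,Y_i),T_i)-\Psi(\xi(T_i^-),T_i)\big]$, which is precisely $\int_0^t\int_{\mathbb{R}^p}\big[\Psi(\xi(s^-)+\varphi(s,y),s)-\Psi(\xi(s^-),s)\big]\,H(ds,dy)$, since $H$ charges only the points $(T_i,Y_i)$ and the predictable (left-limit) value $\xi(s^-)$ is what $H$ records; this is the term written as $\Psi(\xi(s)+\varphi(s,y),s)-\Psi(\xi(s),s)$ in \eqref{Ito1}. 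This establishes the identity pathwise. It then remains to invoke the two integrability hypotheses: the first ensures that $\int_0^\cdot\frac{\partial\Psi}{\partial x}(\xi(s),s)\,\beta(s)\cdot dB(s)$ is a genuine square-integrable martingale, and the second that the compensated form of the jump integral lies in $L^2$, so that every term in \eqref{Ito1} is a well-defined stochastic integral and the equality holds as an identity of processes.

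The main obstacle is the correct bookkeeping of the jump term: one must recognize that the discontinuous contribution enters through the \emph{full} difference $\Psi(\xi+\varphi)-\Psi(\xi)$ rather than its linearization $\frac{\partial\Psi}{\partial x}\,\varphi$ — the latter is what appears for the continuous drift $\alpha$, and conflating the two is the typical error. Relatedly, the formula uses the uncompensated measure $H$, so the last term carries both the predictable part $\lambda_s\,ds\,m(dy)$ and the martingale part of the jumps; no separate first-order jump correction is added, exactly because the sum over $(T_i,Y_i)$ is a finite-variation object rather than an infinitesimal drift. A secondary technical point is justifying the interlacing uniformly on $[0,t]$, which is where nonexplosivity is essential, as it rules out accumulation of jumps; any residual integrability gap in applying the continuous Itô formula between jumps is handled by a standard localization through the jump times (or a reducing sequence of stopping times for the Brownian integral) before passing to the limit.
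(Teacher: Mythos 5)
Your argument is correct, but note that the paper offers no proof of this proposition to compare against: the statement is imported verbatim from Bensoussan and Chevalier-Roignant \cite{bensoussan}, and the paper's own contribution is only the remark that follows it. Your interlacing proof --- a.s.\ finitely many jumps on $[0,t]$, the classical continuous It\^o formula on each inter-jump interval, the exact increment $\Psi(\xi(T_i^-)+\varphi(T_i,Y_i),T_i)-\Psi(\xi(T_i^-),T_i)$ inserted at each jump time, then telescoping --- is the standard finite-activity argument, and it is precisely the reduction the paper itself gestures at in that remark, namely that under $\int_0^t\nu(s)\,{\rm d}s<\infty$ the Hawkes process has a.s.\ finitely many jumps on $[0,t]$, so the formula agrees with Proposition 8.13 of Cont and Tankov \cite{Tonk}. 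One caveat you should make explicit: nonexplosivity (local integrability of the kernel, hence a.s.\ finite activity) is \emph{not} among the hypotheses of the proposition as stated --- it is a standing assumption of the paper's Hawkes framework --- and your proof genuinely needs it, whereas the cited proof in \cite{bensoussan} works in the general $L^2$ setting where the two displayed integrability conditions alone make every term in \eqref{Ito1} well-defined. What each approach buys: yours is elementary, pathwise, and transparent about why the jump contribution is the full difference $\Psi(\xi+\varphi)-\Psi(\xi)$ integrated against the uncompensated measure $H$ with the integrand read at $\xi(s^-)$; the citation route covers the statement exactly as written, without tying its validity to finite activity.
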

\noindent
Observe that, thanks to Eq. (\ref{intHaw}), we can write Eq. (\ref{Ito1}) in terms of the compensated Hawkes random measure
\begin{eqnarray}
&& \hspace*{-0.8cm}
\Psi(\xi(t),t) = \Psi(\xi(0),0) + \int_0^t \mathcal{L}_s \Psi(\xi(s),s)\,{\rm d}s+ \int_0^t \frac{\partial \Psi}{\partial x}(\xi(s),s)\, \beta(s)\, {\rm d}B(s) 
\nonumber 
\\
&& \hspace*{0.8cm}
 + \int_0^t \int_{\mathbb{R}^p} \left( \Psi(\xi(s)+\varphi(s, y),s) - \Psi(\xi(s),s) \right) \widetilde{H}({\rm d}s,{\rm d}y),
 \label{Ito2}
\end{eqnarray}
where the operator \( \mathcal{L}_s \) acts as
\begin{eqnarray}
&& \hspace*{-1cm}
\mathcal{L}_s \Psi(x,s) = \frac{\partial \Psi}{\partial s}(x,s) + \alpha_s \frac{\partial \Psi}{\partial x}(x,s) + \frac{1}{2} |\beta_s|^2 \frac{\partial^2 \Psi}{\partial x^2}(x,s)
\nonumber 
\\
&& \hspace*{0.4cm}
+ \int_{\mathbb{R}^p} \left( \Psi(x+\varphi(s,y),s) - \Psi(x,s) \right) \lambda_s m(dy).
\label{operatorL}
\end{eqnarray}

Note that, under the assumption $\int_{0}^t \nu(s) {\rm d}s<\infty$, for any $t>0$ the Hawkes process has almost surely a finite number of jumps on $[0,t]$ 
(see also Theorem $6$ of \cite{Delattre}), so that Eqs. (\ref{Ito2}) and (\ref{operatorL}) agree with the result in Proposition $8.13$ of \cite{Tonk}.

\section{Global Existence and Positivity of the Solution}
\noindent  
In studying dynamic behavior, it is crucial to establish whether a solution exists globally, i.e., there is no explosion in a finite time. In addition, in the context of epidemic population dynamics, it is also important to assess whether the solutions are non-negative. 

A necessary condition for a stochastic differential equation to have a unique  global solution  for any given initial value, is that the coefficients have the linear growth, and are locally Lipschitz (e.g. see \cite{Mao}, \cite{Arnold}, \cite{GS}).
 However, although the coefficients of system (\ref{System}) do not satisfy  linear growth condition due to the bilinear incidence, they are locally Lipschitz continuous. Nonetheless, the following result demonstrates that the solution  of system (\ref{System}) exists globally, it is unique and non-negative.
For the purpose of proving the following theorem, we consider an equivalent formulation of system \eqref{System}, in which the jump terms are expressed via the compensated Hawkes random measure  $ \tilde{H}_i $ instead of the original measures $ H_i $. 
Then the dynamical system reads as follows

\begin{eqnarray}
\label{System1}
&& \hspace*{-1.3cm}
d S_{h}(t)=\bigg(\theta_{h}-(1-p)\cdot\frac{\eta_1I_r(t-)+\eta_2I_h(t-)}{N_h(t-)}S_{h}(t-)-\mu_{h}S_{h}(t-)
\notag\\
&& \hspace*{-0.1cm}
+\int_{\mathbb{R}}S_{h}(t-)\epsilon_{1}(y)\lambda_{t,1}m(dy)\bigg)dt
\notag\\
&& \hspace*{-0.1cm}
-(1-p)\dfrac{\sigma_{1}I_{r}(t-)d B_{1}(t)+\sigma_{2}I_{h}(t-)dB_{2}(t)}{N_{h}(t-)}S_{h}(t-)+\sigma_{3}S_{h}(t-)dB_{3}(t)
\notag\\
&& \hspace*{-0.1cm}
+\int_{\mathbb{R}}\epsilon_{1}(y)S_{h}(t-)\tilde{H}_{1}(dt,dy)
\notag\\
&& \hspace*{-1.3cm}
d I_{h}(t)=\bigg((1-p)\cdot\frac{\eta_1I_r(t-)+\eta_2I_h(t-)}{N_h(t-)}S_{h}(t-)-(\mu_{h}+\delta_{h}+\zeta)I_{h}(t-)
\notag\\
&& \hspace*{-0.1cm}
+\int_{\mathbb{R}}I_{h}(t-)\epsilon_{2}(y)\lambda_{t,2}m(dy)\bigg)dt
\notag\\
&& \hspace*{-0.1cm}
+(1-p)\dfrac{\sigma_{2}S_{h}(t-)}{N_{h}(t-)}I_{h}(t-)d B_{2}(t)+\sigma_{4}I_{h}(t-)d B_{4}(t)
\notag\\
&& \hspace*{-0.1cm}
+\int_{\mathbb{R}}\epsilon_{2}(y)I_{h}(t-)\tilde{H}_{2}(dt,dy)
\notag\\
&& \hspace*{-1.3cm}
d Q_{h}(t)=\left(\zeta I_{h}(t-)-(\mu_{h}+\gamma_{h}+(1-\theta)\delta_{h})Q_{h}(t-)+\int_{\mathbb{R}}Q_{h}(t-)\epsilon_{3}(y)\lambda_{t,3}m(d y)\right)dt
\notag\\
&& \hspace*{-0cm}
+\sigma_{5}Q_{h}(t-)d B_{5}(t)+\int_{\mathbb{R}}\epsilon_{3}(y)Q_{h}(t-)\tilde{H}_{3}(d t,d y)
\notag\\
&& \hspace*{-1.3cm}
d R_{h}(t)=\left(\gamma_{h}Q_{h}(t-)-\mu_{h}R_{h}(t-)+\int_{\mathbb{R}}R_{h}(t-)\epsilon_{4}(y)\lambda_{t,4}m(d y)\right)dt
\notag\\
&& \hspace*{-0.1cm}
+\sigma_{6}R_{h}(t-)d B_{6}(t)+\int_{\mathbb{R}}\epsilon_{4}(y)R_{h}(t-)\tilde{H}_{4}(dt,dy)
\notag\\
&& \hspace*{-1.3cm}
d S_{r}(t)=\left(\theta_{r}-\frac{\eta_{3}S_{r}(t)I_{r}(t)}{N_{r}(t)}-\mu_{r}S_{r}(t)\right)dt + \sigma_{7}S_{r}(t)d B_{7}(t)
\notag\\
&& \hspace*{-1.3cm}
d I_{r}(t)=\left(\frac{\eta_{3}S_{r}(t)I_{r}(t)}{N_{r}(t)}-(\mu_{r}+\delta_{r})I_{r}(t)\right)dt + \sigma_{8}I_{r}(t)d B_{8}(t),
\end{eqnarray}
with $ \tilde{H}_i(dt,dy) = H_i(dt,dy) - \lambda_{t,i} m(dy)dt $, for $ i=1,2,3,4 $, defined as in \eqref{Eq:Hawkes_rm} where $ \lambda_{t,i} $ is the conditional intensity of the jump process $H_i$ as in Definition \ref{def:hawkes}, and 
 $m(\cdot)$ is the probability measure which describes the distribution of the jump sizes, such that $m(0)=0$. Here we have assumed that the jump size distribution is associated to the human population as a whole and not to the compartmental classes.

 In this setting, the system of equations governing the dynamics of the population incorporates self-exciting jump processes, where the jumps are governed by Hawkes processes. 
The presence of these jumps introduces additional complexity into the analysis, and hence, to prove the existence and uniqueness of a solution, we need to establish conditions that ensure the system is well-posed, i.e., 
the solution remains positive for all the times. 
\par
Following Definition \ref{def:hawkes}, here and in the sequel, we fix the (conditional) intensity function of the Hawkes processes $H_i$ $i=1,\ldots,4$, to be 
$
\lambda_i(t)=\lambda_{0,i}+\int_{0}^{t}\nu_i (t-s)
{\rm d} H_i(s),
$
with 
$\nu_i: \R_{+}\rightarrow \R_{+}$ and $\lambda_{0,i}>0$.
\begin{theorem}
\label{Global}
Let $\mathcal{X}(t)=(S_h(t),I_h(t),Q_h(t),R_h(t),S_r(t), I_r(t))$ satisfy system \eqref{System1} with initial condition $\mathcal{X}(0)\in\R^6_{+}$.  Suppose that, for $ i=1,\dots,4$, the following 
assumptions hold
	\begin{itemize}
		\item [(i)] $\int_{\R}\epsilon_{i}(y)m(\di y)<\infty$;
		\item[(ii)] $\epsilon_{i}(y)>0 $;

\item [(iii)] $\int_{0}^{+\infty} \nu_i(s) {\rm d}s<1$.
	\end{itemize}	
Then the system \eqref{System1} admits a unique global solution $\mathcal{X}(t)$ on $[0,\infty)$, and
\[
\mathbb{P}\bigl\{\mathcal{X}(t)\in\R^6_{+}\text{ for all }t\ge0\bigr\}=1.
\]
\end{theorem}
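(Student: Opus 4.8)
The plan is to follow the classical localization scheme (in the spirit of Mao) adapted to the self-exciting jump setting; the genuinely new ingredient will be taming the \emph{random} Hawkes intensity $\lambda_{s,i}$ in expectation through its compensator, which is where the hypothesis $\int_0^\infty\nu_i(s)\,\di s<1$ enters. First I would invoke local existence: the coefficients of \eqref{System1} are locally Lipschitz on the open orthant (the only singularities come from the factors $1/N_h$, $1/N_r$, which are smooth while the populations stay strictly positive), so for $\mathcal{X}(0)\in\R^6_+$ there is a unique maximal strong solution on a random interval $[0,\tau_e)$, with $\tau_e$ the explosion time. Fix $k_0$ so large that every component of $\mathcal{X}(0)$ lies in $(1/k_0,k_0)$, and for $k\ge k_0$ put
\[
\tau_k=\inf\Bigl\{t\in[0,\tau_e):\ \min_i \mathcal{X}_i(t)\le \tfrac1k\ \text{ or }\ \max_i \mathcal{X}_i(t)\ge k\Bigr\},
\qquad \tau_\infty:=\lim_{k\to\infty}\tau_k\le\tau_e .
\]
It then suffices to show $\tau_\infty=\infty$ almost surely: this rules out explosion and, since no component reaches $0$ before $\tau_\infty$, yields positivity.

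Next I would introduce the Lyapunov function $V(\mathcal{X})=\sum_{x}\phi(x)$ with $\phi(u)=u-1-\log u$, the sum running over the six compartments, observing that $\phi\ge0$ and $\phi(u)\to+\infty$ as $u\to0^+$ or $u\to+\infty$. Applying the It\^o formula of Proposition \ref{prop1} in its compensated form \eqref{Ito2} to $V(\mathcal{X}(t))$, using $\phi'(u)=1-1/u$, $\phi''(u)=1/u^2$ and the jump increment $\phi\bigl(x(1+\epsilon_i(y))\bigr)-\phi(x)=x\,\epsilon_i(y)-\log(1+\epsilon_i(y))$, I would estimate $\mathcal{L}_sV$. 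The a priori bounds \eqref{BTP}--\eqref{HR} are precisely what close this estimate: on $[0,T]$ they give $S_h,I_h,Q_h,R_h\le M$ and $S_r,I_r\le \bar k M$ with $\bar k=\sup_{[0,T]}k(t)$, while the incidence ratios obey $I_h/N_h\le1$, $S_h/N_h\le1$ and $I_r/N_h\le\bar k$, so every term carrying the singular factor $1/N_h$ is in fact bounded. The genuinely negative contributions (the feedback terms $-\theta_h/S_h$, $-\theta_r/S_r$ and the $-\log(1+\epsilon_i)$ pieces) are discarded from above, and the jump contribution is $\int_{\R}x\,\epsilon_i(y)\,\lambda_{s,i}\,m(\di y)\le M\bigl(\int_{\R}\epsilon_i\,\di m\bigr)\lambda_{s,i}$, finite since $\int\epsilon_i^2\,\di m<\infty$ forces $\int\epsilon_i\,\di m<\infty$ ($m$ being a probability measure). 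Collecting everything yields
\[
\mathcal{L}_sV(\mathcal{X}(s))\le K_1+K_2\sum_{i=1}^4\lambda_{s,i},
\]
with $K_1,K_2$ depending only on the parameters, $M$, $\bar k$ and the jump moments.

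Then I would take expectations over $[0,\tau_k\wedge T]$. On this stopped interval the state lies in $[1/k,k]$, so the Brownian integrands $\phi'(\mathcal{X})\beta$ are bounded and the compensated-Hawkes integrands $x\,\epsilon_i-\log(1+\epsilon_i)$ lie in $L^2_{\mathcal{F},\lambda m}$ (using $\int\epsilon_i^2\,\di m<\infty$ and finiteness of $\mathbb{E}[\int\lambda_{s,i}\,\di s]$); by the It\^o isometry for Hawkes random measures these stochastic integrals are true martingales and drop out in expectation. The crucial remaining step is to control $\mathbb{E}\bigl[\int_0^{\tau_k\wedge T}\lambda_{s,i}\,\di s\bigr]\le\int_0^T\mathbb{E}[\lambda_{s,i}]\,\di s$. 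Here the hypothesis $\int_0^\infty\nu_i(s)\,\di s<1$ is decisive: the renewal identity $\mathbb{E}[\lambda_{t,i}]=\lambda_{0,i}+\int_0^t\nu_i(t-s)\mathbb{E}[\lambda_{s,i}]\,\di s$ gives $\mathbb{E}[\lambda_{t,i}]\le \lambda_{0,i}/\bigl(1-\int_0^\infty\nu_i\bigr)$, hence a finite, $k$-independent bound $\mathbb{E}[V(\mathcal{X}(\tau_k\wedge T))]\le C_T:=V(\mathcal{X}(0))+K_1T+\tfrac{K_2 T}{1-\int_0^\infty\nu_i}\sum_i\lambda_{0,i}$.

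Finally, on $\{\tau_k\le T\}$ some component of $\mathcal{X}(\tau_k)$ equals $1/k$ or $k$, so $V(\mathcal{X}(\tau_k))\ge\eta(k):=\min\{\phi(1/k),\phi(k)\}\to\infty$; Chebyshev's inequality then gives $\mathbb{P}(\tau_k\le T)\le C_T/\eta(k)\to0$, whence $\mathbb{P}(\tau_\infty\le T)=0$ for every $T$ and $\tau_\infty=\infty$ almost surely, delivering global existence, uniqueness and positivity. I expect the main obstacle, and the real departure from the standard Poisson-jump argument, to be exactly this treatment of the stochastic intensity: one cannot absorb $\lambda_{s,i}$ into a deterministic $K(1+V)$ bound, so it must be kept explicit in $\mathcal{L}_sV$ and controlled in expectation via the compensator, which is precisely where the nonexplosivity/stationarity condition on the kernels is indispensable.
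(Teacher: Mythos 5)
Your proposal is correct and follows essentially the same route as the paper: localization via stopping times, the Lyapunov function $\sum (x-1-\log x)$, the Hawkes--It\^o formula with the jump increment $x\,\epsilon_i(y)-\log(1+\epsilon_i(y))$, bounds closed by \eqref{BTP}--\eqref{HR}, and the blow-up of the Lyapunov function at the boundary to force $\tau_\infty=\infty$. The only cosmetic differences are that you control the random intensity through its first moment via the renewal identity $\mathbb{E}[\lambda_{t,i}]\le \lambda_{0,i}/(1-\int_0^\infty\nu_i(s)\,ds)$, whereas the paper uses Cauchy--Schwarz together with a second-moment bound $\mathbb{E}[\lambda_{s,i}^2]\le K_1^2$, and you conclude directly by Chebyshev's inequality rather than by the paper's explicit contradiction argument --- both are equivalent forms of the same estimate.
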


\begin{proof} Since the coefficients of the equation are locally Lipschitz continuous for any given initial value $\mathcal{X}(0)\in \mathbb{R}^6_+$, recalling the theory of stochastic differential equations there is a unique local solution $\mathcal{X}(t)$ on $t \in [0,\tau_e)$, where $\tau_e$ is the explosion time (see \cite{applebaum}).\\
To prove that this solution is global, we have to show  that $\tau_{e}=\infty$ a.s.

This can be done by setting a sufficiently large value for $m_0\geq 0$, such that $\mathcal{X}(0) \in \left[\frac{1}{m_0}, m_0\right]^{\times 6}$. Then we define a stopping time for  each integer $m \geq m_0$:
\[
\tau_m=\inf\Bigl\{t\in[0,\tau_e):\ \exists X\in\{S_h,I_h,Q_h,R_h,S_r,I_r\}\ \text{such that}\ X(t)\notin(1/m,m)\Bigr\}.
\]
 By definition, $\tau_m$ is increasing as  $m\to \infty$, and we set $\tau_\infty=\underset{m \to \infty}{\lim} \tau_m$, then $\tau_\infty \leq \tau_e$  a.s.
To complete the proof, we have to show that $\tau_ \infty=\infty$ a.s. If this statement is false, then we can find a pair of constants $T > 0$ and $\varepsilon \in (0,1)$ such that 
$\mathrm{P}\{\tau_\infty \leq T\} > \varepsilon$. 
Hence, there is an integer $m_1 \geq m_0$ such that
\begin{eqnarray}\label{eq3.1}
\mathrm{P}\{\tau_m \leq T\} \geq \varepsilon, \,\, \textnormal{for all} \,\,\, m \geq m_1.
\end{eqnarray}
Let us set \( A_m = \{\tau_m \leq T\} \). On the event $ A_m $, the definition of the stopping time \( \tau_m \) guarantees that at least one of the six state variables \( S_h(t), I_h(t), Q_h(t), R_h(t), S_r(t), I_r(t) \) reaches either the upper boundary \( m \) or the lower boundary $ 1/m $ at time $ \tau_m $.

\noindent Since \( P(A_m) \geq \varepsilon \), it follows that for all sufficiently large \( m \), there is a positive probability that at least one state variable hits either the upper boundary \( m \) or the lower boundary \( 1/m \). This ensures that \( A_m \) captures the event where the system dynamics push at least one variable outside the bounded region defined by \( (1/m, m) \), as required by the stopping time condition.

\noindent 
Let us now consider the following  function $\mathcal{V}:\mathbb{R}^6_+\to \mathbb{R}_+$:
\begin{eqnarray*}
&&\mathcal{V}(S_h,I_h,Q_h,R_h,S_r,I_r)=(S_h-1-\log
S_h)+(I_h-1-\log I_h)\cr\cr&&+(Q_h-1-\log
Q_h)+(R_h-1-\log R_h)+(S_r-1-\log S_r)+(I_r-1-\log I_r).
\end{eqnarray*}
This function is non-negative, since $x-1-\log x\geq0$ for all $x>0$. By It\^o's formula, 
\begin{align}\label{leeq3.11}
&d\mathcal{V}(\mathcal{X}(t))= L\mathcal{V}(\mathcal{X}(t))dt\notag\\
&
\textcolor{black}{ 
+\int_{\mathbb{R}}[S_{h}(t-)\epsilon_{1}(y)-\log(1+\epsilon_{1}(y))]\lambda_{t,1}m(\di y)+\int_{\mathbb{R}}[I_{h}(t-)\epsilon_{2}(y)-\log(1+\epsilon_{2}(y))]\lambda_{t,2}m(\di y)}\notag\\
&\textcolor{black}{ 
+\int_{\mathbb{R}}[Q_{h}(t-) \epsilon_{3}(y)-\log(1+\epsilon_{3}(y))]\lambda_{t,3}m(\di y)+\int_{\mathbb{R}}[R_{h}(t-)\epsilon_{4}(y)-\log(1+\epsilon_{4}(y))]\lambda_{t,4}m(\di y)}\notag\\
&+\frac{(1-p)\cdot(\sigma_1I_r(t-)S_h(t-)dB_1(t)+\sigma_2I_h(t-)S_h(t-)dB_2(t))}{N_h(t-)}\cdot\left(\frac{1}{S_h(t-)}-\frac{1}{I_h(t-)}\right)\notag\\
&+(S_h(t-)- 1)\sigma_3dB_3(t)
+(I_h(t-)-1)\sigma_4dB_4(t)+(Q_h(t-)-1)\sigma_5dB_5(t)\notag\\
&+(R_h(t-)-1)\sigma_6dB_6(t)+(S_r(t-)-1)\sigma_7dB_7(t)+(I_r(t-)-1)\sigma_8dB_8(t)\notag\\
&+\int_{\mathbb{R}}[S_{h}(t-)\epsilon_{1}(y)-\log(1+\epsilon_{1}(y))]\tilde{H}_{1}(d t,d y)+\int_{\mathbb{R}}[I_{h}(t-)\epsilon_{2}(y)-\log(1+\epsilon_{2}(y))]\tilde{H}_{2}(d t,d y)\notag\\
&+\int_{\mathbb{R}}[Q_{h}(t-)\epsilon_{3}(y)-\log(1+\epsilon_{3}(y))]\tilde{H}_{3}(d t,d y)+\int_{\mathbb{R}}[R_{h}(t-)\epsilon_{4}(y)-\log(1+\epsilon_{4}(y))]\tilde{H}_{4}(d t,d y),
\end{align}
where $L\mathcal{V}:\mathbb{R}^6_+\to \mathbb{R}_+$ is defined by
\begin{eqnarray*}
&&L\mathcal{V}(\mathcal{X}(t))=\theta_h+\theta_r+4\mu_h+\delta_h+\zeta+2\mu_r+\delta_r+\gamma_h+(1-\theta)\delta_h\cr\cr&&+(1-p)\cdot\frac{(\eta_1I_r(t)+\eta_2I_h(t))}{N_h(t)}+\frac{\eta_3I_r(t)}{N_r(t)}-\mu_hS_h(t)-(\mu_h+\delta_h)I_h(t)\cr\cr&&-(\mu_h+(1-\theta)\delta_h)Q_h(t)-\mu_hR_h(t)-\mu_rS_r(t)
-(\mu_r+\delta_r)I_r(t)-\frac{\theta_h}{S_h(t)}\cr\cr&&-(1-p)\cdot\frac{(\eta_1I_r(t)+\eta_2I_h(t))S_h(t)}{N_h(t)I_h(t)}-\frac{ \zeta I_h(t)}{Q_h(t)}-\frac{\gamma_hQ_h(t)}{R_h(t)}-\frac{\theta_r}{S_r(t)}-\frac{\eta_3S_r(t)}{N_r(t)}\cr\cr&&+(1-p)^2\cdot\frac{\sigma^2_1I^2_r(t)+\sigma^2_2I^2_h(t)}{2N^2_h(t)}+(1-p)^2\cdot\frac{\sigma^2_2S^2_h(t)}{2N^2_h(t)}\cr\cr&&+\frac{\sigma^2_3}{2}+\frac{\sigma^2_4}{2}+\frac{\sigma^2_5}{2}+\frac{\sigma^2_6}{2}+\frac{\sigma^2_7}{2}+\frac{\sigma^2_8}{2}.
\end{eqnarray*}
Following Assumption (\ref{HR}), we have that $\dfrac{I_r}{N_h}\leq \bar{k}$ and $S_h< N_h$, $I_r< N_r$, $S_r< N_r$,  $I_h< N_h$, so that we have
\begin{eqnarray}\label{leq3.21}
&L\mathcal{V}(\mathcal{X}(t))&\leq K,
\end{eqnarray}
where the constant $K$ is given by \begin{eqnarray*}
K&=&\theta_h+\theta_r+4\mu_h+\delta_h+\zeta+2\mu_r+\delta_r+\gamma_h+(1-\theta)\delta_h+(1-p)\cdot (\eta_1\bar{k}+\eta_2)\cr\cr&&+\eta_3+(1-p)^2\cdot\frac{\sigma^2_1}{2}\cdot{\bar{k}}^2+(1-p)^2\sigma^2_2+\frac{\sigma^2_3}{2}+\frac{\sigma^2_4}{2}+\frac{\sigma^2_5}{2}+\frac{\sigma^2_6}{2}+\frac{\sigma^2_7}{2}+\frac{\sigma^2_8}{2}.
\end{eqnarray*}

\noindent Integrating both sides of Eq. (\ref{leeq3.11}) from 0 to
$\tau_m \wedge t=\min\{\tau_m,t\}$, where $t\in[0,T]$, and making use 
of Eq. (\ref{leq3.21}), we obtain the following relation
\begin{align*}
\int^{\tau_m \wedge t}_0d\mathcal{V}\bigg(\mathcal{X}(s)\bigg) \leq&
\int^{\tau_m \wedge
t}_0 Kds +
\textcolor{black}{\int_0^{\tau_m \wedge t}\int_{\mathbb{R}}[S_h(s-) \epsilon_{1}(y)-\log(1+\epsilon_{1}(y))]\lambda_{s,1}m(\di y)ds
} \\
& + \textcolor{black}{\int_0^{\tau_m \wedge t}\int_{\mathbb{R}}[I_h(s-) \epsilon_{2}(y)-\log(1+\epsilon_{2}(y))]\lambda_{s,2}m(\di y)ds
} \\
& + \textcolor{black}{\int_0^{\tau_m \wedge t}\int_{\mathbb{R}}[Q_h(s-) \epsilon_{3}(y)-\log(1+\epsilon_{3}(y))]\lambda_{s,3}m(\di y)ds
} \\
& + \textcolor{black}{\int_0^{\tau_m \wedge t}\int_{\mathbb{R}}[R_h(s-) \epsilon_{4}(y)-\log(1+\epsilon_{4}(y))]\lambda_{s,4}m(\di y)ds
} \\
&-(1-p)\int^{\tau_m \wedge
t}_0\frac{(S_h(s-)-1)\sigma_1I_r(s-)}{N_h(s-)}dB_1(s)\\
&+(1-p)\int^{\tau_m \wedge
t}_0\frac{(I_h(s-)-S_h(s-))\sigma_2}{N_h(s-)}dB_2(s)\\
&+
\int^{\tau_m \wedge
t}_0
(S_h(s-)-1)\sigma_3dB_3(s-)+\int^{\tau_m \wedge
t}_0(I_h(s-)-1)\sigma_4dB_4(s)\\
&+\int^{\tau_m \wedge
t}_0
(Q_h(s-)-1)\sigma_5dB_5(s)+\int^{\tau_m \wedge
t}_0
(R_h(s-)-1)\sigma_6dB_6(s)\\
&+\int^{\tau_m \wedge
t}_0 (S_r(s-)-1)\sigma_7dB_7(s)+\int^{\tau_m \wedge
t}_0 (I_r(s-)-1)\sigma_8dB_8(s)\\
&+\int_{0}^{\tau_m \wedge
t}\int_{\mathbb{R}}[S_{h}(s-)\epsilon_{1}(y)-\log(1+\epsilon_{1}(y))]\tilde{H}_{1}(d s,d y)\\
&+\int_{0}^{\tau_m \wedge
t}\int_{\mathbb{R}}[I_{h}(s-)\epsilon_{2}(y)-\log(1+\epsilon_{2}(y))]\tilde{H}_{2}(d s,d y)\\
&+\int_{0}^{\tau_m \wedge
t}\int_{\mathbb{R}}[Q_{h}(s-)\epsilon_{3}(y)-\log(1+\epsilon_{3}(y))]\tilde{H}_{3}(d s,d y)\\
&+\int_{0}^{\tau_m \wedge
t}\int_{\mathbb{R}}[R_{h}(s-)\epsilon_{4}(y)-\log(1+\epsilon_{4}(y))]\tilde{H}_{4}(d s,d y),
\end{align*}
since $\mathcal{X}(\tau_m \wedge
t)\in \mathbb{R}^6_+$, $t\in[0,T]$.
The remainder of the proof follows the arguments presented in Mao et al. \cite{Mao2}. 

Taking the expectation on both sides of the above equation, we have
\begin{eqnarray}
&& \hspace*{-1cm}
\mathrm{E}\mathcal{V}\left(\mathcal{X}(\tau_m\wedge T)\right)\leq \mathcal{V}(\mathcal{X}(0))+ \mathrm{E}\int^{\tau_m \wedge
T}_0 Kds
\nonumber\\
&& \hspace*{-1cm}
+{\mathrm{E} \int_0^{\tau_m \wedge T}\int_{\mathbb{R}}[S_h(s-) \epsilon_{1}(y)-\log(1+\epsilon_{1}(y))]\lambda_{s,1}m(\di y)ds
} 
\nonumber \\
&& \hspace*{-1cm}
 + {\mathrm{E} \int_0^{\tau_m \wedge T}\int_{\mathbb{R}}[I_h(s-) \epsilon_{2}(y)-\log(1+\epsilon_{2}(y))]\lambda_{s,2}m(\di y)ds
} 
\nonumber \\
&& \hspace*{-1cm}
 + {\mathrm{E} \int_0^{\tau_m \wedge T}\int_{\mathbb{R}}[Q_h(s-) \epsilon_{3}(y)-\log(1+\epsilon_{3}(y))]\lambda_{s,3}m(\di y)ds
} 
\nonumber \\
&& \hspace*{-1cm}
 +{\mathrm{E} \int_0^{\tau_m \wedge T}\int_{\mathbb{R}}[R_h(s-) \epsilon_{4}(y)-\log(1+\epsilon_{4}(y))]\lambda_{s,4}m(\di y)ds.
 }
\label{rel11}
\end{eqnarray}

Due to Eq. (\ref{BTP}), we have that
\begin{eqnarray*}
&& \hspace*{-1.8cm}
	\left| \mathrm{E} \int_0^{\tau_m \wedge T}\int_{\mathbb{R}}[S_h(s-) \epsilon_{1}(y)-\log(1+\epsilon_{1}(y))]\lambda_{s,1}m(\di y) {\rm d}s \right| 
	\\
&& \hspace*{-0.7cm}	
\leq (M+1) \left| \mathrm{E} \int_0^{T}  \lambda_{s,1}{\rm d}s \int_{\mathbb{R}} \epsilon_{1}(y) m(\di y) \right|.
	\end{eqnarray*}
	Being $ \int_{0}^{+\infty}\nu(s) {\rm d}s <1 $, there exists a constant $ K_{1}>0 $ such that $ \mathrm{E}[\lambda_{s,1}]<K_{1}$, so that it finally results
\begin{equation*}
	\left| \mathrm{E} \int_0^{\tau_m \wedge T}\int_{\mathbb{R}}[S_h(s-) \epsilon_{1}(y)-\log(1+\epsilon_{1}(y))]\lambda_{s,1}m(\di y) {\rm d}s \right| \leq K_1 \cdot T \cdot C_{1},
\end{equation*}	
where we have set 
$C_{1}:=(M+1)\Big( \int_{\mathbb{R}} \epsilon_{1}(y) m(\di y) \Big). $

The same reasoning can be applied to the other integrals involving the variables $I_h$, $Q_h$ and $R_h$.
Hence, substituting into Eq. (\ref{rel11}), we have that
$
\mathrm{E}\mathcal{V}\left(\mathcal{X}(\tau_m\wedge T)\right) \leq \mathcal{V}(\mathcal{X}(0)) + \hat{K} T,
$
where $\hat{K}$ is a positive constant.

\noindent Recalling that $A_m=\{\tau_m \leq t\}$, for $m \geq m_1$, 
due to Eq. (\ref{eq3.1}), we have $\mathrm{P}(A_m) \geq \varepsilon$.
Note that for every $\omega \in A_m$, there is at least one term among $S_h(\tau_m\wedge
T)$, $I_h(\tau_m\wedge T)$, $Q_h(\tau_m\wedge
T)$, $R_h(\tau_m\wedge T)$, $S_r(\tau_m\wedge T)$ and $I_r(\tau_m\wedge T)$ which is
greater or equal to $m$ or lower or equal to $\dfrac{1}{m}$. Then
$$\mathcal{V}(\mathcal{X}(\tau_m\wedge T)) \geq
\Bigg(\left(m-1-\log m\right)\wedge\left(\frac{1}{m}-1-\log \frac{1}{m}\right)\Bigg).
$$
Therefore, from Eqs. (\ref{eq3.1}) and (\ref{rel11}),  we can easily obtain
\begin{eqnarray*}
\mathcal{V}(\mathcal{X}(0))+KT \geq \mathrm{E}\Bigg(\mathbb{I}_{A_m(\omega)}
\mathcal{V}(\mathcal{X}(\tau_m))\Bigg)
\geq\varepsilon\Bigg(\left(m-1-\log m\right)\wedge\Bigg(\frac{1}{m}-1-\log \frac{1}{m}\Bigg)\Bigg),
\end{eqnarray*}
where $\mathbb{I}_{A_m(\omega)}$  is the indicator function of $A_m$.  Letting $m$ increase to infinity, we find a   contradiction:
$\infty>\mathcal{V}(\mathcal{X}(0))+\hat{K}T=\infty.$ Thus, the assumption
$\mathrm{P} \{\tau_{\infty}<\infty\}>0$
must be false, and  $\tau_{\infty}=\infty$ a.s.
The non-negativity of the solution is guaranteed as a product of the argument above due to the definition of $(\tau_m)_{m\geq m_0}$ and $\tau_{\infty}=\infty$ a.s.
\end{proof}
}

\begin{remark}
Compared to the diffusion-only Mpox model in \cite{Rahman2025}, the inclusion of Hawkes self-exciting jumps requires two additional conditions. 
First, we assume a subcritical excitation kernel
$
\int_0^\infty \nu_i(s)\,ds < 1,
$
which ensures that the intensity $\lambda_{t,i}$ remains uniformly bounded and prevents explosive jump cascades. 
Second, we impose $\varepsilon_i(y)\ge 0$, so that jumps act as nonnegative multiplicative perturbations, preserving positivity of the compartments and allowing the logarithmic Lyapunov arguments to hold.
\end{remark}

\section{Extinction of the Disease Under the Reproduction Number Threshold}

In this section we analyze the long-term behavior of the disease dynamics in relation to the basic reproduction number \( \mathcal{R}_0 \), which represents the expected number of secondary infections generated by a single infected individual in a fully susceptible population. It is as a critical threshold parameter in epidemiology. We show that when \( \mathcal{R}_0 < 1 \), the disease cannot sustain itself in the population and will eventually die out with probability one.

\begin{theorem}\label{Exti}
Let $\mathcal{X}(t)$, $t \geq 0$, be the solution to system~\eqref{System} with initial condition $\mathcal{X}(0) \in \mathbb{R}_+^6$. Assuming that (i) $\int_{\R}\epsilon_{i}^{2}(y)m(\di y)<\infty$; (ii) $\epsilon_{i}(y)>0 $;
(iii)  $\int_{0}^{+\infty} \nu_i(s) {\rm d}s<1$, then the basic reproduction number is given by 
\begin{eqnarray}\label{R}
&& \hspace*{-1.5cm}
\mathcal{R}_0 =
\frac{1}{\min\{\mu_h, \mu_r\} + \min\{\delta_h, \delta_r\}}
\nonumber
\\
&& \hspace*{-1cm}
\times
\Bigg\{
(1-p)(\eta_1 + \eta_2) + \eta_3+ 
\frac{\lambda_{0,2}\, {\mathcal G}_2}{1-\int_{0}^{+\infty} \nu_2(s)\, {\rm d}s}+\frac{\lambda_{0,3}\, {\mathcal G}_3}{1-\int_{0}^{+\infty} \nu_3(s)\, {\rm d}s}
\Bigg\},
\end{eqnarray}
where $
{\mathcal G}_i:=\int_\mathbb{R} \epsilon_i(y) m({\rm d} y),
$ for $i=2,3$.
If $\mathcal{R}_0<1$, then the number of infected individuals converges to zero almost surely, that is,
\begin{equation}\label{sum}
\lim_{t \to \infty} (I_h(t) + Q_h(t) + I_r(t)) = 0, \quad \text{a.s.}
\end{equation}
\end{theorem}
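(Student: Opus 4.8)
The plan is to track the total infected mass $W(t) := I_h(t) + Q_h(t) + I_r(t)$ and to show that $\tfrac{1}{t}\log W(t)$ has a strictly negative $\limsup$ whenever $\mathcal{R}_0 < 1$, which forces $W(t)\to 0$ almost surely and hence each summand to $0$. First I would apply the Hawkes--It\^o formula of Proposition \ref{prop1} to $\Psi = \log W$. Writing out the drift of $W$, the quarantine flux cancels (the $+\zeta I_h$ in $dQ_h$ against the $-\zeta I_h$ in $dI_h$), so the continuous drift reduces to the transmission inflow $(1-p)\frac{\eta_1 I_r + \eta_2 I_h}{N_h}S_h + \frac{\eta_3 S_r I_r}{N_r}$ minus the removal terms $(\mu_h+\delta_h)I_h + (\mu_h+\gamma_h+(1-\theta)\delta_h)Q_h + (\mu_r+\delta_r)I_r$.

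Next I would estimate the drift of $\log W$ from above. Using $S_h \le N_h$ and $S_r \le N_r$ together with $I_h, I_r \le W$, the transmission part divided by $W$ is at most $(1-p)(\eta_1+\eta_2) + \eta_3$, while the removal part divided by $W$ is at most $-d_{\min}$ with $d_{\min} := \min\{\mu_h,\mu_r\} + \min\{\delta_h,\delta_r\}$ the denominator appearing in \eqref{R}. The second-order It\^o term $-\frac{1}{2W^2}\,\di\langle W\rangle$ coming from the Brownian parts is non-positive and can simply be discarded in the upper bound. For the jump compensator I would use $\log(1+x)\le x$ for $x\ge 0$ (this is where $\epsilon_i\ge 0$ enters): since $H_1$ and $H_4$ perturb only $S_h$ and $R_h$, which do not enter $W$, only $H_2,H_3$ contribute, giving a compensator drift bounded by $\frac{I_h}{W}\lambda_{t,2}\mathcal{G}_2 + \frac{Q_h}{W}\lambda_{t,3}\mathcal{G}_3 \le \lambda_{t,2}\mathcal{G}_2 + \lambda_{t,3}\mathcal{G}_3$. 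This is exactly where the two Hawkes terms of $\mathcal{R}_0$ originate.

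I would then integrate, divide by $t$, and isolate the two martingale families: the Brownian stochastic integrals and the compensated Hawkes integrals $\int_0^t\int_{\mathbb{R}}[\cdots]\,\tilde H_i(\di s,\di y)$. Each is a local martingale whose predictable quadratic variation grows at most linearly in $t$ --- for the Brownian part because $I_h/W, Q_h/W, I_r/W$ and $S_h/N_h$ are all bounded by one, and for the Hawkes part because $[\log(1+\epsilon_i I/W)]^2 \le \epsilon_i^2$ with $\int_{\mathbb{R}}\epsilon_i^2\,m(\di y) < \infty$ and the intensity integrable. By the strong law of large numbers for martingales, each such term divided by $t$ tends to $0$ almost surely. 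It then remains to evaluate the time average of the intensities: under $\int_0^\infty \nu_i(s)\,\di s < 1$ the self-exciting process is (asymptotically) stationary and ergodic, so $\frac{1}{t}\int_0^t \lambda_{s,i}\,\di s \to \frac{\lambda_{0,i}}{1-\int_0^\infty \nu_i(s)\,\di s}$ almost surely. Collecting everything gives $\limsup_{t\to\infty}\frac{1}{t}\log W(t) \le d_{\min}(\mathcal{R}_0 - 1) < 0$, whence $W(t)\to 0$, i.e.\ \eqref{sum}.

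The main obstacle I anticipate lies in the rigorous control of the Hawkes intensity, on two fronts: establishing the almost-sure ergodic limit $\frac{1}{t}\int_0^t\lambda_{s,i}\,\di s \to \lambda_{0,i}/(1-\int_0^\infty\nu_i)$, which rests on the renewal/branching (immigration--birth) structure of the process rather than on a Markov argument, since for a general kernel $\lambda_{t,i}$ is non-Markovian; and verifying that the compensated-Hawkes integral genuinely obeys the martingale strong law, i.e.\ that its quadratic variation is almost surely linearly bounded so the SLLN applies. A secondary, bookkeeping-level point is the removal estimate for the quarantine class, where one needs $\mu_h+\gamma_h+(1-\theta)\delta_h \ge d_{\min}$ for the clean $-d_{\min}W$ bound to be legitimate; this should be recorded explicitly (it holds, for instance, whenever the quarantine exit rate is not smaller than $\min\{\delta_h,\delta_r\}$).
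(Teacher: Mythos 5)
Your proposal follows essentially the same route as the paper's own proof: the same Lyapunov function $\log(I_h+Q_h+I_r)$, the same drift bounds via $S_h\le N_h$, $S_r\le N_r$, the same use of $\log(1+x)\le x$ to reduce the jump compensator to $\lambda_{t,2}\mathcal{G}_2+\lambda_{t,3}\mathcal{G}_3$, the same martingale strong-law arguments (It\^o isometry with $[\log(1+x)]^2\le x^2$ for the compensated Hawkes integrals), and the same almost-sure limit $\Lambda_i(t)/t\to\lambda_{0,i}/(1-\int_0^\infty\nu_i(s)\,\mathrm{d}s)$ for subcritical Hawkes intensities, which the paper takes from Fierro et al. Your ``secondary bookkeeping point'' is well spotted: the clean $-d_{\min}$ removal bound does require $\mu_h+\gamma_h+(1-\theta)\delta_h\ge \min\{\mu_h,\mu_r\}+\min\{\delta_h,\delta_r\}$, a condition the paper's proof uses silently without stating (it can fail, e.g., for $\theta$ near $1$ and small $\gamma_h$), so recording it explicitly as you do is a genuine, if minor, improvement.
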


\begin{proof}
Let us apply It\^o's formula to the Lyapunov function $\mathcal{U} = \log(I_h + Q_h + I_r)$, where $\mathcal{U} : \mathbb{R}^3_+ \to \mathbb{R}$, in order to analyze the stochastic dynamics of the total number of infected individuals. The detailed computation yields the drift and diffusion components that govern the evolution of $\mathcal{U}$. Denote 
$$\Sigma(t-):=I_h(t-)+Q_h(t-)+I_r(t-),\qquad
X_4:=I_h,\ X_5:=Q_h,\ X_8:=I_r.$$
Then
\begin{align}\label{mb}
d\log\Sigma(t)
&=\frac{1}{\Sigma(t-)}\Bigg\{
\Big[(1-p)\frac{\eta_1 I_r(t)+\eta_2 I_h(t-)}{N_h(t-)}S_h(t-)-(\mu_h+\delta_h+\zeta)I_h(t-)\Big]\nonumber\\
&\qquad+\Big[\zeta I_h(t-)-(\mu_h+\gamma_h+(1-\theta)\delta_h)Q_h(t-)\Big]
+\Big[\frac{\eta_3 S_r(t)I_r(t)}{N_r(t)}-(\mu_r+\delta_r)I_r(t)\Big]\nonumber\\
&\qquad+\int_{\mathbb R}\!\big(I_h(t-)\epsilon_2(y)\lambda_2(t)+Q_h(t-)\epsilon_3(y)\lambda_3(t)\big)m(dy)
\Bigg\}dt \nonumber\\
&\quad-\frac{1}{2\Sigma(t-)^2}\Bigg\{(1-p)^2\Big(\frac{\sigma_2 S_h(t-)I_h(t-)}{N_h(t-)}\Big)^2
+\sum_{k\in\{4,5,8\}}\sigma_k^2\,X_k(t-)^2\Bigg\}dt \nonumber\\
&\quad+\frac{1}{\Sigma(t-)}\Bigg\{(1-p)\frac{\sigma_2 S_h(t-)}{N_h(t-)}I_h(t-)\,dB_2(t)
+\sum_{k\in\{4,5,8\}}\sigma_k\,X_k(t-)\,dB_k(t)\Bigg\}\nonumber\\
&\quad+\int_{\mathbb R}\log\!\Big(1+\frac{\epsilon_2(y)I_h(t-)}{\Sigma(t-)}\Big)\tilde H_2(dt,dy)
+\int_{\mathbb R}\log\!\Big(1+\frac{\epsilon_3(y)Q_h(t-)}{\Sigma(t-)}\Big)\tilde H_3(dt,dy).
\end{align}

Let us consider 
\begin{align*}
L\mathcal U(t)
&=\frac{1}{\Sigma(t-)}\Bigg\{
\Big[(1-p)\frac{\eta_1 I_r(t-)+\eta_2 I_h(t-)}{N_h(t-)}S_h(t-)-(\mu_h+\delta_h+\zeta)I_h(t-)\Big]\\
&\qquad+\Big[\zeta I_h(t-)-(\mu_h+\gamma_h+(1-\theta)\delta_h)Q_h(t-)\Big]
+\Big[\frac{\eta_3 S_r(t)I_r(t-)}{N_r(t)}-(\mu_r+\delta_r)I_r(t-)\Big]\\
&\qquad+\int_{\mathbb R}\!\big(I_h(t-)\epsilon_2(y)\lambda_2(t)+Q_h(t-)\epsilon_3(y)\lambda_3(t)\big)\,m(dy)
\Bigg\}\\
&\quad-\frac{1}{2\Sigma(t-)^2}\Bigg\{(1-p)^2\Big(\frac{\sigma_2 S_h(t-)I_h(t-)}{N_h(t-)}\Big)^2
+\sum_{k\in\{4,5,8\}}\sigma_k^2\,X_k(t-)^2\Bigg\}.
\end{align*}

\noindent
Since $ S_h \leq N_h $, $ S_r \leq N_r $ and $ Q_h \leq N_h$, we obtain the following upper bound

\begin{eqnarray*}
&&\hspace*{-1cm}
\mathcal{LU}(t) \leq 
\frac{
(1 - p)(\eta_1 + \eta_2)[I_h(t-) + I_r(t-)] + \zeta I_h(t-) +\eta_3 I_r(t-)
}{I_h(t-) + Q_h(t-) + I_r(t-)}
\\
&&\hspace*{-1cm}
-\frac{
(\mu_h + \delta_h + \zeta) I_h(t-)
+ [\mu_h + \gamma_h + (1 - \theta)\delta_h] Q_h(t-)
+ (\mu_r + \delta_r) I_r(t-)}{I_h(t-) + Q_h(t-) + I_r(t-)}
\\
&&\hspace*{-1cm}
+\frac{1}{I_h(t-) + Q_h(t-) + I_r(t-)}
\int_{\mathbb{R}} \left[ I_h(t-) \epsilon_2(y) \lambda_2(t)+ Q_h(t-) \epsilon_3(y) \lambda_3(t) \right] m(dy)
\\
&&\hspace*{-1cm}
-\frac{
(1 - p)^2 \sigma_2^2 I_h(t-)^2
+ \sigma_4^2 I_h(t-)^2 + \sigma_5^2 Q_h(t-)^2 + \sigma_8^2 I_r(t-)^2}{2(I_h(t-) + Q_h(t-) + I_r(t-))^2}.
\end{eqnarray*}
\noindent
Since the $\sigma^2$ terms are always non-negative, removing them yields the following simpler inequality
\begin{align*}
\mathcal{LU}(t) \leq \;&
(1 - p)(\eta_1 + \eta_2) + \eta_3
- \left( \min\{\mu_h, \mu_r\} + \min\{\delta_h, \delta_r\} \right)
\\
&+
\frac{\lambda_2(t) I_h(t-) \int_{\mathbb{R}} |\epsilon_2(y)| m({\rm d}y)+\lambda_3(t) Q_h(t-) \int_{\mathbb{R}} |\epsilon_3(y)| m({\rm d}y)}{I_h(t-) + Q_h(t-) + I_r(t-)}.
\end{align*}
\noindent
Thus, we obtain the following bound

\begin{align*}
\frac{1}{t} \int_{0}^t
\mathcal{LU}(s) {\rm d}s \leq \;&
(1 - p)(\eta_1 + \eta_2) + \eta_3
- \left( \min\{\mu_h, \mu_r\} + \min\{\delta_h, \delta_r\} \right) \\
&+ \frac{\Lambda_2(t)}{t} \int_{\mathbb{R}} |\epsilon_2(y)| m({\rm d}y)
+ \frac{\Lambda_3(t)}{t} \int_{\mathbb{R}} |\epsilon_3(y)| m({\rm d}y),
\end{align*}
where
$
\Lambda_i(t):=\int_{0}^t \lambda_i(s) {\rm d}s.
$
Recalling that, for a subcritical Hawkes process, 
we have  
\begin{equation}
\lim_{t\rightarrow +\infty} 
\frac{\Lambda_i(t)}{t}=\frac{\lambda_{0,i}}{1-\int_{0}^{+\infty} \nu_i(s) {\rm d}s}\quad  a.s,
\label{SLLNC}
\end{equation}
(see, for instance \cite{Fierro}), 
we conclude that 
$$
\frac{1}{t} \int_{0}^t
\mathcal{LU}(s) {\rm d}s \leq 
\left( \min\{\mu_h, \mu_r\} + \min\{\delta_h, \delta_r\} \right) (R_0-1).
$$
Let us now denote by 
\begin{align*} M_1(t)=&\int_{0}^t\frac{1}{I_h(r-) + Q_h(r-) + I_r(r-)} \Bigg[ 
(1-p) \frac{\sigma_2 S_h(r-)}{N_h(r-)} I_h(r-) \, dB_2(r) \\
&\quad + \sigma_4 I_h(r-) \, dB_4(r) + \sigma_5 Q_h(r-) \, dB_5(r) + \sigma_8 I_r(r-) \, dB_8(r) 
\Bigg].
\end{align*}
Since $S_h \leq N_h $, it follows that $\frac{S_h}{ N_h}\leq1$. Also, we note that $\frac{I_h}{I_h + Q_h + I_r}\leq1$, $\frac{Q_h}{I_h + Q_h + I_r}\leq1$, $\frac{I_r}{I_h + Q_h + I_r}\leq1$.
Hence, we obtain the estimate
\[ M_1(t)\leq 
(1-p)\sigma_2  B_2(t) + \sigma_4  B_4(t) + \sigma_5 \, B_5(t) + \sigma_8 \, dB_8(t). 
\]
By the strong low of large numbers (see \cite{GS}), for $ i=2,4,5,8,$  we have
\begin{equation*}
    \underset{t\to\infty}{\lim}\frac{B_i(t)}{t}=0 \,\,\,\textnormal{a.s.}
\end{equation*}
Therefore, it follows that 
    \begin{equation}\label{win2}
    \underset{t\to\infty}{\lim}\frac{M_1(t)}{t}=0 \,\,\,\textnormal{a.s.}
\end{equation}
The stochastic integrals
\begin{align*}
&M_2(t) =\int_0^t \int_{\mathbb{R}} \left[ \log\left( 1 + \frac{\epsilon_2(y) I_h(r-)}{I_h(r-) + Q_h(r-) + I_r(r-)} \right) \right] \tilde{H}_2(dr, dy),
\end{align*}
and
\begin{align*}
&M_3(t) = \int_0^t \int_{\mathbb{R}} \left[ \log\left( 1 + \frac{\epsilon_3(y) Q_h(r-)}{I_h(r-) + Q_h(r-) + I_r(r-)} \right)  \right] \tilde{H}_3(dr, dy),
\end{align*}
are square-integrable martingales (see \cite{kunita}, Chapter 2).

According to the It\^o isometry for integrals with respect to compensated random measures, we have:
\begin{equation*}
    \mathbb{E}[M_2(t)^2] = \mathbb{E}\left[ \int_0^t \int_{\mathbb{R}} \left( \log\left(1 + \epsilon_2(y) X(r)\right) \right)^2 
    \lambda_2(r) \, m({\rm d}y) \, {\rm d}r \right],
\end{equation*}
where $ X(r) := \frac{I_h(r)}{I_h(r) + Q_h(r) + I_r(r)} \leq 1 $. Using the analytical inequality
$ [\log(1 + x)]^2 \leq x^2$, for  $x>0$, we obtain
\begin{equation*}
     [\log\left(1 + \epsilon_2(y) X(r)\right)]^2 \leq  \epsilon_2(y)^2 X(r)^2 \leq \epsilon_2(y)^2,
\end{equation*}
since $ X(r) \leq 1 $.
Therefore,
\begin{equation*}
    \mathbb{E}[M_2(t)^2] \leq \Lambda_2(t) \int_{\mathbb{R}} \epsilon_2(y)^2 m({\rm d}y).
\end{equation*}
Hence, recalling Eq. (\ref{SLLNC}), It follows that
\begin{equation*}
    \mathbb{E}\left[\left( \frac{M_2(t)}{t} \right)^2\right] \leq \frac{1}{t} \frac{\Lambda_2(t)}{t} \int_{\mathbb{R}} \epsilon_2(y)^2 m({\rm d}y) \xrightarrow{t \to \infty} 0,\quad a.s.
\end{equation*}
Consequently, by Chebyshev's inequality and the strong law of large numbers for martingales (see, e.g., \cite{GS}), we conclude that
\begin{equation}\label{win3}
    \lim_{t \to \infty} \frac{M_2(t)}{t} = 0 \quad \text{a.s.}
\end{equation}
Similarly, we obtain
\begin{equation}\label{win4}
    \lim_{t \to \infty} \frac{M_3(t)}{t} = 0 \quad \text{a.s.}
\end{equation}
From Eq. (\ref{mb}), and recalling Eqs. (\ref{win2}), (\ref{win3}) and (\ref{win4}) we have

\begin{equation*}
    \underset{t\to\infty}{\lim \sup}\frac{\log(I_h(t) + Q_h(t) + I_r(t))}{t}\leq(\min\{\mu_h,\mu_r\}+\min\{\delta_h,\delta_r\} )(\mathcal{R}_0-1), \,\,\,\textnormal{a.s.}
\end{equation*}
Since $\mathcal{R}_0<1$ (by assumption), we obtain
\begin{equation*}
     \underset{t\to\infty}{\lim \sup}\frac{\log(I_h(t) + Q_h(t) + I_r(t))}{t}<0 \,\,\,\textnormal{a.s.}
\end{equation*}
leading to (\ref{sum}), which completes the proof.
\end{proof}


\begin{remark}
In the Hawkes-driven stochastic setting, the quantity $R_0$ in~\ref{R} should be interpreted as an \textit{effective reproduction threshold} rather than a classical deterministic basic reproduction number. It separates extinction ($R_0 < 1$) from persistence in the mean ($R_0 > 1$) under clustered transmission shocks.
\end{remark}



To analyse how the basic reproduction number $\mathcal{R}_0$ depends on contact rates, quarantine effectiveness, and Hawkes-type clustered transmission, we fix all demographic and epidemiological parameters at the baseline values listed in Table~\ref{model_params_main}, unless otherwise indicated.

\begin{table}[H]
  \centering
  \scriptsize
  \renewcommand{\arraystretch}{1.15}
  \begin{tabular}{llll}
    \toprule
    \textbf{Symbol} & \textbf{Description} & \textbf{Value (units)} & \textbf{Source / note} \\
    \midrule
    $\theta_h$ & Human recruitment rate 
               & $7.95\times10^{-5}~\text{day}^{-1}$ 
               & Peter et al.~\cite{s1} \\
    $\mu_h$    & Natural human death rate 
               & $4.11\times10^{-3}~\text{day}^{-1}$ 
               & Bhunu \& Mushayabasa~\cite{p1} \\
    $\delta_h$ & Disease-induced human death rate 
               & $5.48\times10^{-4}~\text{day}^{-1}$ 
               & Odom et al.~\cite{Odom} \\
    $\zeta$    & Detection/isolation rate $I_h \!\to Q_h$ 
               & $5.48\times10^{-3}~\text{day}^{-1}$ 
               & Peter et al.~\cite{s1}\\
    $\gamma_h$ & Recovery rate in quarantine / infected humans 
               & $2.27\times10^{-3}~\text{day}^{-1}$ 
               & Peter et al.~\cite{s1} \\
    $\theta$   & Treatment effectiveness in $Q_h$ 
               & \textit{Assumed} $\approx 0.8$ 
               & Policy / clinical effectiveness \\
    $p$        & Public enlightenment effectiveness 
               & \textit{Assumed} $\approx 0.30$ 
               & Calibration / awareness \\[1mm]
    \midrule
    $\eta_1$   & Rodent--human contact rate 
               & $6.85\times10^{-7}~\text{day}^{-1}$ 
               & Peter et al.~\cite{s1} \\
    $\eta_2$   & Human--human contact rate 
               & $1.64\times10^{-7}~\text{day}^{-1}$ 
               & Peter et al.~\cite{s1} \\
    $\eta_3$   & Rodent--rodent contact rate 
               & $7.40\times10^{-5}~\text{day}^{-1}$ 
               & Peter et al.~\cite{s1} \\[1mm]
    \midrule
    $\theta_r$ & Rodent recruitment rate 
               & $5.48\times10^{-4}~\text{day}^{-1}$ 
               & Peter et al.~\cite{s1} \\
    $\mu_r$    & Natural rodent death rate 
               & $5.48\times10^{-6}~\text{day}^{-1}$ 
               & Bhunu \& Mushayabasa~\cite{p1} \\
    $\delta_r$ & Disease-induced rodent death rate 
               & $1.37\times10^{-3}~\text{day}^{-1}$ 
               & Peter et al.~\cite{s1} \\
    \bottomrule
  \end{tabular}
  \caption{Baseline epidemiological parameters for mpox in African settings
  (following Peter et al.~\cite{s1} and related African studies),
  expressed in daily units ($\text{day}^{-1}$).}
    \label{model_params_main}
\end{table}

The Hawkes jump parameters used in the simulations are reported in Table~\ref{tab:baseline}. These values correspond to a moderate level of clustering and yield a baseline estimate of $\mathcal{R}_0\approx 2.3$, consistent with published mpox data for African settings.

\begin{table}[H]
  \centering
  \scriptsize
  \renewcommand{\arraystretch}{1.2}
  \begin{tabular}{cccccc}
    \toprule
    ${\mathcal G}_2$ & ${\mathcal G}_3$ & $\lambda_{0,2}$ & $\lambda_{0,3}$ & $\alpha_2$ & $\alpha_3$ \\
    \midrule
    1 & 1 & $2\times 10^{-4}$ & $2\times 10^{-4}$ & 0.20 & 0.15 \\
    \midrule
    $G_2$ exp. & $G_3$ exp. & base int.~2 & base int.~3 & mem.~2 & mem.~3 \\
    \bottomrule
  \end{tabular}
  \caption{Baseline Hawkes parameters used in the simulations (abbreviations: int.~=~intensity, mem.~=~memory). All intensities $\lambda_{0,i}$ are expressed in day$^{-1}$.}
  \label{tab:baseline}
\end{table}

In summary, demographic and epidemiological parameters are fixed at the values in Table~\ref{model_params_main}, while the Hawkes parameters are given in Table~\ref{tab:baseline}. Variations in the planes $(\eta_1+\eta_2,\delta_h)$, $(\eta_1+\eta_2,p)$, and $(\alpha_2,\alpha_3)$ are then explored to assess the sensitivity of $\mathcal{R}_0$.

\begin{figure}[H]
  \centering
  \includegraphics[width=0.9\textwidth]{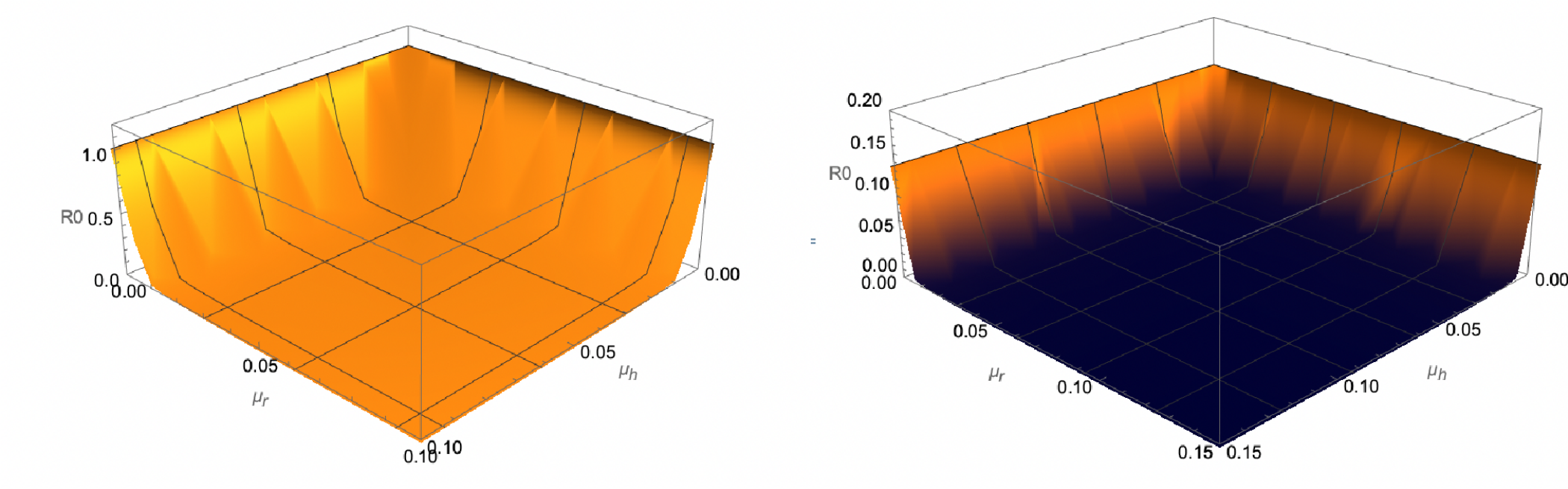}
  \caption{$\mathcal{R}_0$ as a function of the natural death rates $\mu_h$ and $\mu_r$. In the left-hand side the Hawkes jumps have exponential distribution with mean value $1$. The right-hand side describes the process in absence of jumps. The involved
  parameters are fixed according to Tables \ref{tab:baseline} and \ref{model_params_main}.}
  \label{figRomhmr}
\end{figure}

The plots in Figure \ref{figRomhmr} shows $\mathcal{R}_0$ as a function of the natural death rates in humans and rodents, both in the presence of Hawkes jumps (left-hand side) and in their absence (right-hand side).
Note that increasing the natural mortality rate 
in either humans or rodents leads to a nonlinear decrease in $\mathcal{R}_0$, and that the presence of jumps significantly increases the value of $\mathcal{R}_0$.

\begin{figure}[H]
  \centering
  \includegraphics[width=0.9\textwidth]{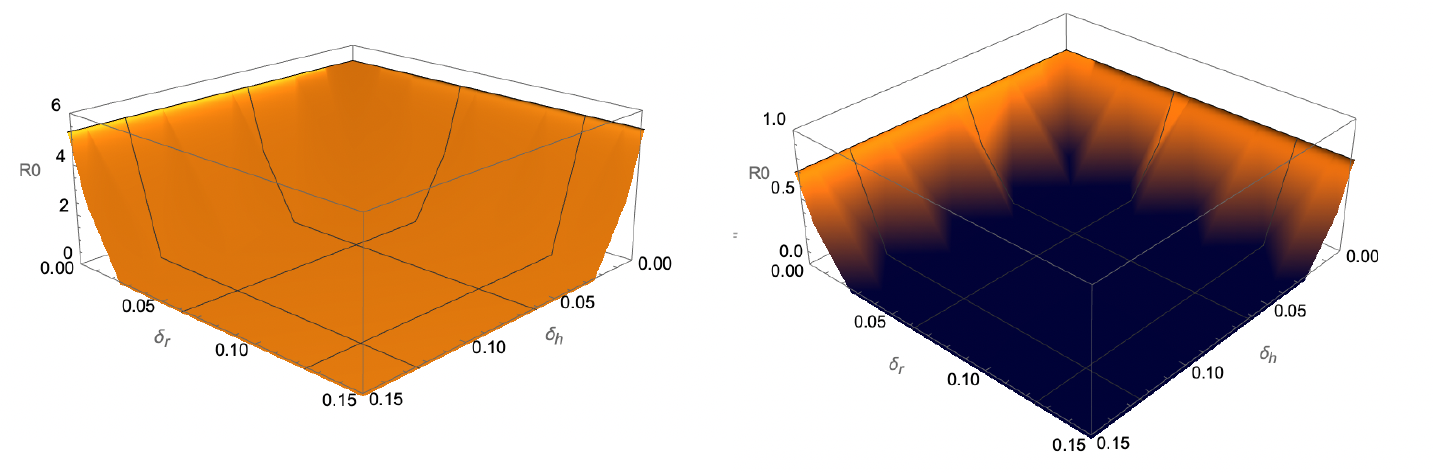}
  \caption{$\mathcal{R}_0$ as a function of the
  disease-induced death rates $\delta_h$ and $\delta_r$. 
  On the left-hand side, the Hawkes jumps are exponentially distributed with mean value $1$, 
  whereas on the right-hand side, there is absence of jumps. The involved parameters are fixed according to Tables \ref{tab:baseline} and \ref{model_params_main}.}
  \label{figRodhdr}
\end{figure}

Figure \ref{figRodhdr} shows that higher disease-induced death rates effectively reduce $\mathcal{R}_0$ by limiting the infectious period and reducing further transmission. In addition, the presence of 
exponential Hawkes jumps with mean $1$ drives $\mathcal{R}_0$ 
above $1$ for small values of $\delta_h$ or $\delta_h$. In contrast, in the absence of jumps, $\mathcal{R}_0$ consistently remains below $1$.

To further investigate control strategies, we analyse how the basic reproduction
number $\mathcal{R}_0$ depends on contact intensity, disease-induced mortality,
and the contribution of Hawkes jumps. Figure~\ref{eta-delta} reports two
heat maps computed with the empirical parameters from 
Tables~\ref{model_params_main}--\ref{tab:baseline} and the baseline Hawkes
settings.
\begin{figure}[H]
  \centering
  \includegraphics[width=0.95\textwidth]{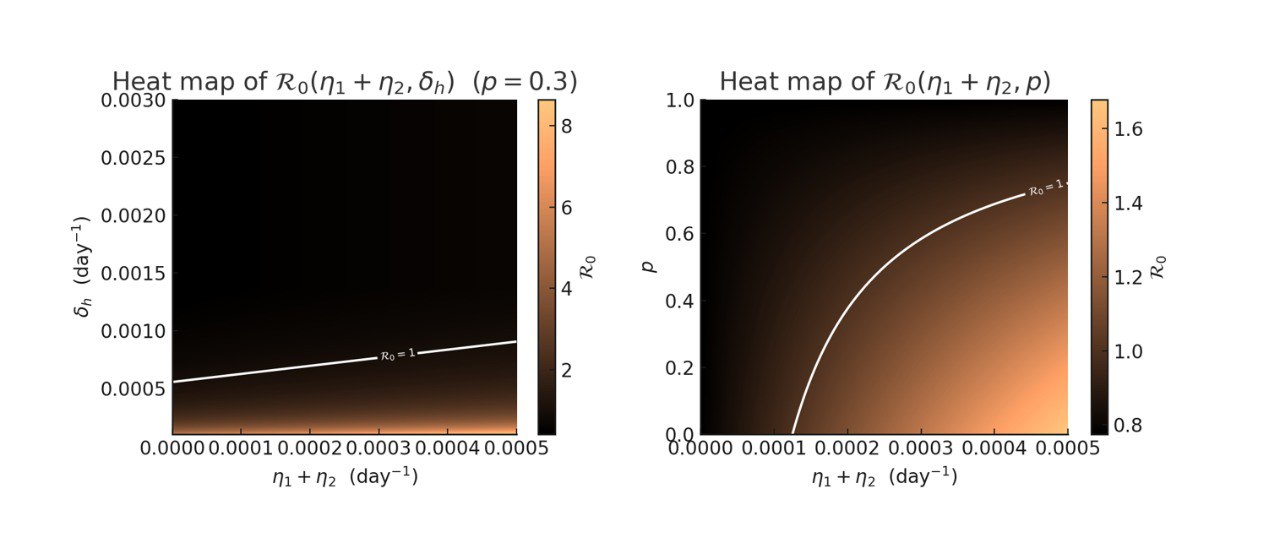}
  \caption{
Heat maps of $\mathcal{R}_0$ computed with baseline parameter values.
  (Left) Dependence on the combined rodent--human contact rate $\eta_1+\eta_2$
  and the human disease-induced mortality $\delta_h$, with public enlightenment
  fixed at $p=0.3$. (Right) Dependence on $(\eta_1+\eta_2,p)$ for fixed
  $\delta_h=10^{-3}\,\text{day}^{-1}$. The white contour marks the threshold
  $\mathcal{R}_0=1$.
}
  \label{eta-delta}
\end{figure}
In Figure~\ref{eta-delta}, higher $\delta_h$ shifts the system into the
subcritical regime even for relatively large contact intensities, while
increasing public enlightenment $p$ compensates for high transmission pressure
and expands the region where $\mathcal{R}_0<1$.

\begin{figure}[H]
  \centering
  \includegraphics[width=0.98\textwidth]{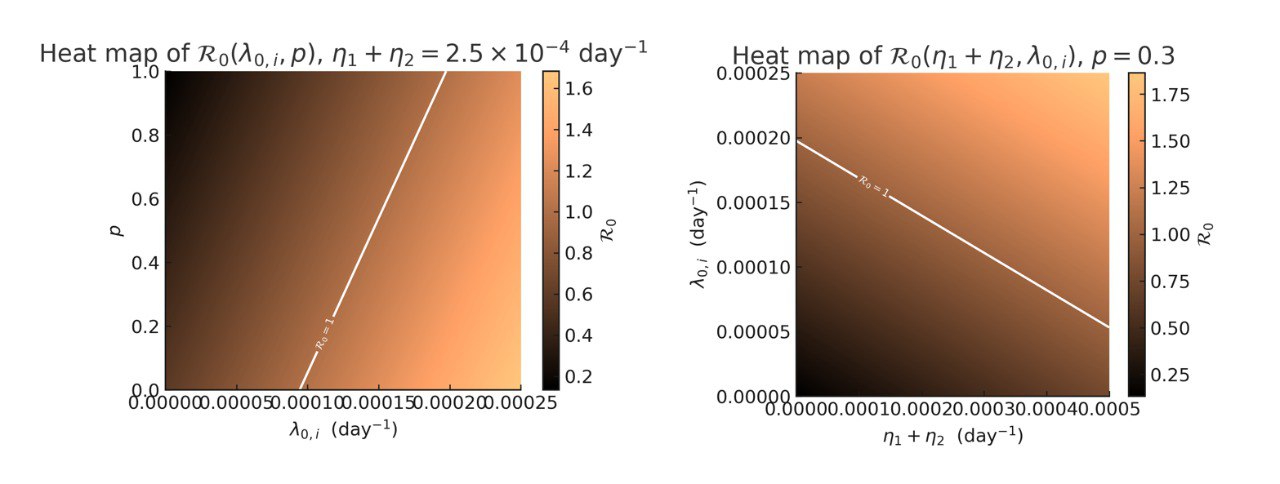}
 \caption{Effect of clustered transmission on $\mathcal{R}_0$. Left:
 $\mathcal{R}_0$ as a function of $(\lambda_{0,i},p)$ with
 $\eta_1+\eta_2=2.5\times10^{-4}\,\text{day}^{-1}$. Right:
 $\mathcal{R}_0$ as a function of $(\eta_1+\eta_2,\lambda_{0,i})$ with $p=0.3$.
 All other parameters follow Tables~\ref{model_params_main}--\ref{tab:baseline}.
 The white line indicates $\mathcal{R}_0=1$.}
  \label{fig_eta-delta}
\end{figure}

Figure~\ref{fig_eta-delta} highlights the dominant influence of clustered
transmission. Larger baseline jump intensities $\lambda_{0,i}$ require stronger
public-health interventions (e.g., higher $p$ or lower contact rates) to keep
$\mathcal{R}_0$ below the epidemic threshold. The critical contour
$\mathcal{R}_0=1$ delineates an admissible region of low contact intensity and
weak clustering from a supercritical region driven by frequent transmission
bursts.

Having specified the model and baseline parameters in
Tables~\ref{model_params_main}--\ref{tab:hawkes_params}, 
we now turn to the stochastic simulations of sample paths of system~\eqref{System1}. Numerical simulations were performed using a jump-adapted Euler-Maruyama scheme for stochastic differential equations with multiplicative jumps (see \cite{barcode})
Unless stated otherwise, all numerical experiments use this reference parameter set.
Random environmental fluctuations are represented by diffusion terms in each equation,
with volatility coefficients
$\sigma_1=\dots=\sigma_6=0.3$ for the human compartments and
$\sigma_7=\sigma_8=0.05$ for the rodent compartments.
These diffusion terms were chosen heuristically so that the simulated trajectories
exhibit a realistic level of variability around the deterministic dynamics.

Jump sizes $\epsilon_i$ are generated as exponential marks with mean $G_i$,
$
\epsilon_{i}^{\mathrm{raw}} \sim \mathrm{Exp}(\text{mean}=G_i),
$
and then truncated in order to avoid unrealistically large shocks
\[
\epsilon_i = \min\{\epsilon_{i}^{\mathrm{raw}}, \epsilon_{\max}\}, \qquad \epsilon_{\max}=3,
\]
so that a single jump can increase a compartment by at most a factor $1+\epsilon_{\max}=4$. This assumption is introduced purely for simulation stability and does not enter the theoretical analysis. All analytical results require only positivity and finite second moments of $\epsilon_i$, which hold irrespective of this truncation.
The corresponding probability measure $m(dy)$ is the law of this truncated exponential
distribution, which still satisfies $m(\{0\})=0$ and has finite second moments.

In the numerical scheme, jumps act multiplicatively on the affected compartment,
$
X \leftarrow X \prod_{j=1}^{N_{i,t}} (1+\epsilon_{i,j}),
$
where $X \in \{S_h,I_h,Q_h,R_h\}$, $N_{i,t}$ is the number of jumps in channel $i$
during the current time step, and $\epsilon_{i,j}$ are the truncated marks. This
update interprets jumps as relative increases, preserves positivity of all state variables
and, together with truncation, keeps trajectories within biologically realistic ranges.

\begin{table}[H]
  \centering
  \scriptsize
  \renewcommand{\arraystretch}{1.15}
  \begin{tabular}{llll}
    \toprule
    \textbf{Parameter} & \textbf{Description} & \textbf{Value} & \textbf{Source / note} \\
    \midrule
    $\lambda_{0,i}$ & Baseline intensity (for $S_h,I_h,Q_h,R_h$) & $2\times 10^{-4}$ & \emph{Assumed} \\
    $\alpha_i$ & Excitation strength & $0.2$ ($i=1,2$), $0.15$ ($i=3,4$) & \emph{Assumed} \\
    $\beta_i$  & Memory decay rate & $1.0$ & \emph{Assumed} \\
    $\mathcal{G}_i$ & Mean jump size (exp. marks) & $1$ & \emph{Assumed} \\
    \bottomrule
  \end{tabular}
  \caption{Hawkes process parameters used for human compartments. All values are assumed for illustration of burst-like epidemic shocks.}
  \label{tab:hawkes_params}
\end{table} 
Tables~\ref{model_params_main}--\ref{tab:hawkes_params} show all parameters used in the simulations. 
We ran 80 stochastic scenarios for each variable. 
Thin lines represent single trajectories, the thick line shows the average, and dots mark Hawkes jumps in human compartments. 
\begin{figure}[H]
  \centering
  \includegraphics[width=0.95\textwidth]{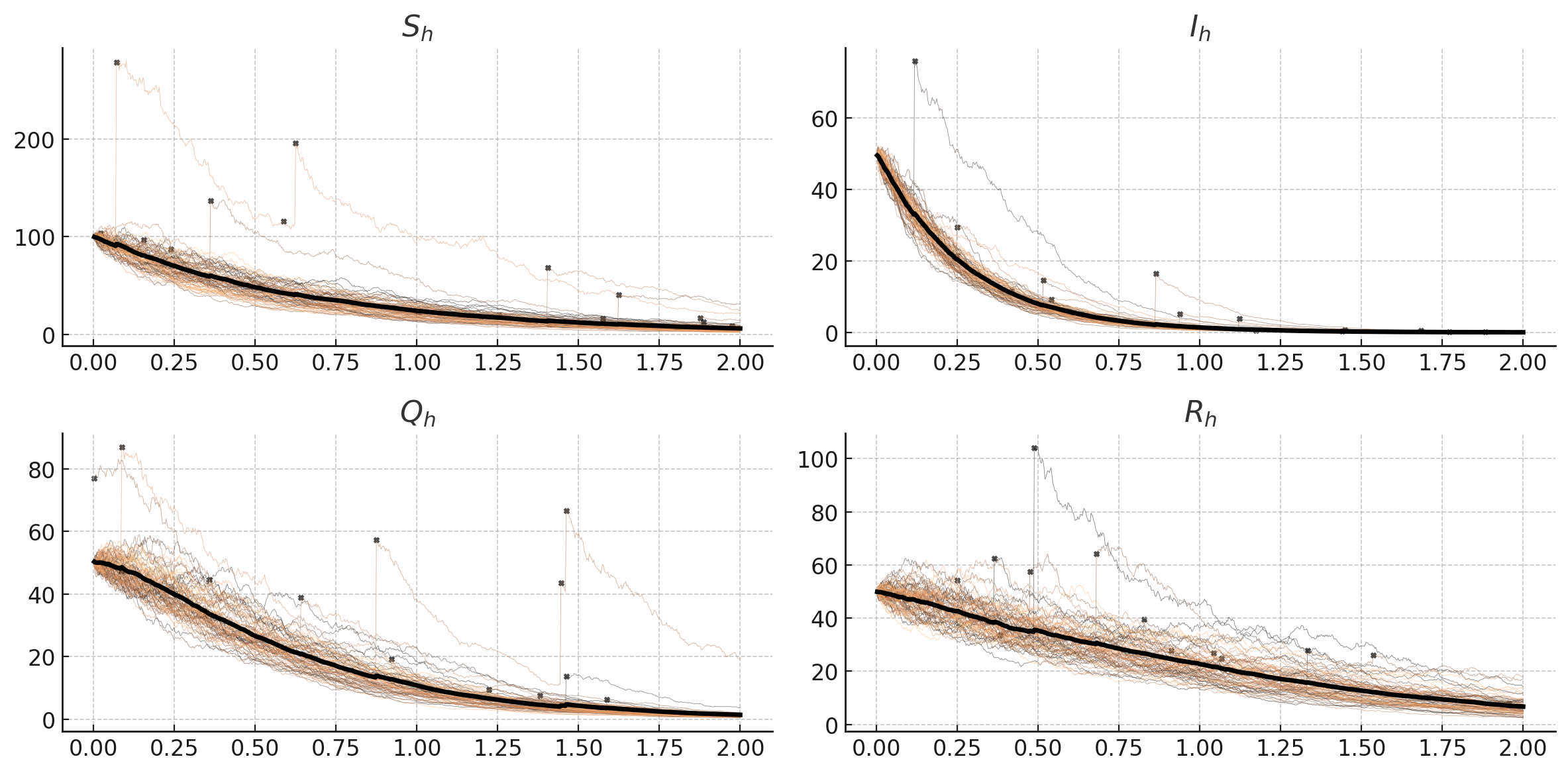}
  \caption{Stochastic simulations of the human compartments. 
  Each subplot shows 80 trajectories (thin lines), the mean path (thick curve), 
  and Hawkes jumps. Simulations were run with the baseline parameters 
  in Tables~\ref{model_params_main}--\ref{tab:hawkes_params}. 
  Diffusion noise and Hawkes jumps were included. }
  \label{humans}
\end{figure}
Figure~\ref{humans} presents the four human compartments: susceptible $S_h$, infected $I_h$, quarantined $Q_h$, and recovered $R_h$. 
The results illustrate how diffusion noise and Hawkes jumps create variation between runs and produce sudden epidemic shocks.

Figure~\ref{rodents} shows stochastic sample paths for the rodent population: susceptible rodents $S_r$ and infected rodents $I_r$. We simulated 80 trajectories for each compartment using the same baseline parameters as in the human case; thin lines denote individual paths and the thick curve their mean. Unlike for humans, only diffusion noise was included for rodents, with no Hawkes jumps.

\begin{figure}[H]
  \centering
  \includegraphics[width=0.82\textwidth]{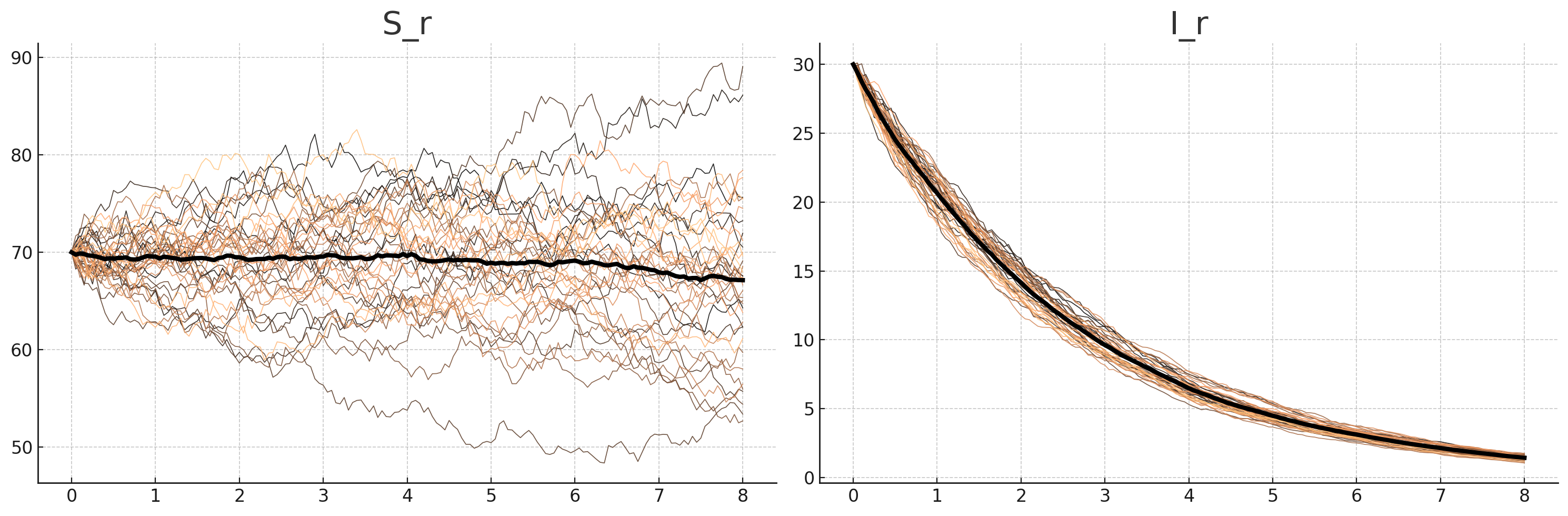}
  \caption{Stochastic simulations of the rodent compartments. 
  Each subplot shows 80 trajectories (thin lines) and the mean path (thick curve). 
  Only diffusion noise was added, because Hawkes jumps were applied only to the human compartments. 
 }
  \label{rodents}
\end{figure}

\section{Persistence Thresholds for the Coupled Human-Rodent Dynamics}
In this section we explain what long-term survival of the disease means in our human-rodent model.
\begin{definition}
    The system \eqref{System1} is said to be persistent in the mean if the following condition holds:
  \begin{equation}\label{ir}
\liminf_{t \to \infty} \frac{1}{t} \int_0^t I_r(s)ds> 0 \quad \text{a.s.}
\end{equation}
and 
 \begin{equation}\label{ih}
     \liminf_{t \to \infty} \frac{1}{t} \int_0^t I_h(s)ds> 0 \quad \text{a.s.}
 \end{equation}

\end{definition} 
Note that the condition of persistence in the mean (positive stability) primarily concerns the long-term survival of the infection in the system. Since individuals in quarantine $Q_h$ do not spread the infection, their presence in the long run does not determine whether the disease will remain in the population. The key factor for infection persistence is whether the population of infected individuals $I_h$ and infected rodents $ I_r$ remains strictly positive on average over time. If infected humans $ I_h $  is the only class being evaluated for stability, this may be insufficient to assess the survival of the virus. It is also necessary to consider infected rodents $I_r$, as they may sustain viral circulation even when human cases decline.
\begin{lemma}\label{Lemma5.1.} Let $f \in \mathcal{C}[[0,\infty) \times \Omega, (0,\infty)]$. If there exist positive constants $\lambda_0, \lambda$ such that
\[
\log f(t) \geq \lambda t - \lambda_0 \int_0^t f(s) \, ds + F(t), \quad \text{a.s.}
\]
\textit{for all $t \geq 0$, where $F \in \mathcal{C}[[0,\infty) \times \Omega, \mathbb{R}]$ and $\underset{t \to \infty}\lim \frac{F(t)}{t} = 0$ a.s., then}
\[
\liminf_{t \to \infty} \frac{1}{t} \int_0^t f(s) \, ds \geq \frac{\lambda}{\lambda_0} \quad \text{a.s.}
\]
\end{lemma}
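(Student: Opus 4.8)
The plan is to reduce the pathwise differential inequality to a comparison for the auxiliary function $g(t) := \int_0^t f(s)\,ds$ and then read off its asymptotic growth rate. Since $f$ is continuous and strictly positive, $g$ is $C^1$ with $g'(t)=f(t)>0$, so the hypothesis rewrites as $\log g'(t) \geq \lambda t - \lambda_0 g(t) + F(t)$. Exponentiating gives $g'(t) \geq e^{\lambda t + F(t)} e^{-\lambda_0 g(t)}$, hence $g'(t)\,e^{\lambda_0 g(t)} \geq e^{\lambda t + F(t)}$. Multiplying by $\lambda_0$ and recognising the left-hand side as a total derivative, $\frac{d}{dt} e^{\lambda_0 g(t)} = \lambda_0 g'(t) e^{\lambda_0 g(t)} \geq \lambda_0 e^{\lambda t + F(t)}$. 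Integrating from $0$ to $t$ and using $g(0)=0$ yields the key bound
\[
e^{\lambda_0 g(t)} \geq 1 + \lambda_0 \int_0^t e^{\lambda s + F(s)}\, ds ,
\]
whence, taking logarithms, $g(t) \geq \frac{1}{\lambda_0}\log\!\left(1 + \lambda_0\int_0^t e^{\lambda s + F(s)}\,ds\right)$.

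Next I would extract the exponential growth rate using the sublinearity of $F$. Fix $\varepsilon \in (0,\lambda)$. On the almost sure event where $F(t)/t \to 0$, there is a (random) time $T_0(\omega)$ with $F(s) \geq -\varepsilon s$ for all $s \geq T_0$, so that $\int_0^t e^{\lambda s + F(s)}\,ds \geq \int_{T_0}^t e^{(\lambda-\varepsilon)s}\,ds = \frac{1}{\lambda-\varepsilon}\left(e^{(\lambda-\varepsilon)t} - e^{(\lambda-\varepsilon)T_0}\right)$. Substituting this lower bound into the logarithm and dividing by $\lambda_0 t$, the dominant contribution is $(\lambda-\varepsilon)t/(\lambda_0 t)$, while the additive constants (the $1$, the $\log\frac{\lambda_0}{\lambda-\varepsilon}$, and the subtracted $e^{(\lambda-\varepsilon)T_0}$ term) contribute only $O(1/t)$. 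Taking $\liminf_{t\to\infty}$ therefore gives $\liminf_{t\to\infty}\frac{1}{t}\int_0^t f(s)\,ds \geq \frac{\lambda-\varepsilon}{\lambda_0}$ almost surely, and intersecting over a countable sequence $\varepsilon_n \downarrow 0$ delivers $\liminf_{t\to\infty}\frac{1}{t}\int_0^t f(s)\,ds \geq \frac{\lambda}{\lambda_0}$ a.s.

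The main obstacle is the bookkeeping around the random threshold $T_0(\omega)$: because $F$ is a stochastic process, the time after which $F(s)\geq -\varepsilon s$ depends on $\omega$, so every estimate must be carried out pathwise on the almost sure event $\{F(t)/t\to 0\}$ and only for $t$ sufficiently large. Care is needed to keep the conclusion almost sure rather than in expectation; passing to a countable sequence $\varepsilon_n\downarrow 0$ preserves the a.s. statement since a countable intersection of full-measure events is again of full measure. The remaining analytic point, namely that the logarithm of the exponential integral grows like $(\lambda-\varepsilon)t$ and that all constant and $\log$-type corrections are negligible after dividing by $t$, is elementary and follows from $e^{(\lambda-\varepsilon)t}$ dominating every additive term.
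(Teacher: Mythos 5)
Your proof is correct and complete. The paper itself states this lemma without proof (it is a standard comparison lemma from the stochastic persistence literature), and your argument --- passing to $g(t)=\int_0^t f(s)\,ds$, recognizing $\lambda_0 g'(t)e^{\lambda_0 g(t)}$ as the derivative of $e^{\lambda_0 g(t)}$, integrating to get $e^{\lambda_0 g(t)}\ge 1+\lambda_0\int_0^t e^{\lambda s+F(s)}\,ds$, and then extracting the rate $(\lambda-\varepsilon)/\lambda_0$ from the pathwise bound $F(s)\ge -\varepsilon s$ for $s\ge T_0(\omega)$ before letting $\varepsilon\downarrow 0$ along a countable sequence --- is precisely the classical proof of this result, with the almost-sure bookkeeping handled correctly.
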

We first prove  auxiliary results that will be needed to identify the persistent conditions.

\begin{theorem}[Rodent persistence]\label{thm:rodent-persist-noeps}
Assume positive constants $\mu_r$, $\delta_r$, $\eta_3$, $\sigma_8$ and that $N_r(t)\ge \underline N_r>0$ for all $t\ge0$ a.s. If
$
a:=\eta_3-\mu_r-\delta_r-\tfrac12\,\sigma_8^2>0,
$
then
\[
\liminf_{t\to\infty}\frac1t\int_0^t I_r(s)\,ds\ \ge\
\frac{\underline N_r}{\eta_3}\,\Big(\eta_3-\mu_r-\delta_r-\tfrac12\sigma_8^2\Big)\ >\ 0\quad\text{a.s.}
\]
\end{theorem}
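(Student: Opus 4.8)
The plan is to apply the logarithmic transformation to the infected-rodent compartment and reduce the statement to the differential inequality handled by Lemma~\ref{Lemma5.1.}. Since the rodent equations in \eqref{System1} carry only Brownian noise and no Hawkes jumps, I would use the ordinary It\^o formula (not the Hawkes-It\^o formula of Proposition~\ref{prop1}) applied to $\log I_r(t)$. From the SDE for $I_r$ this gives
\[
d\log I_r(t) = \left(\frac{\eta_3 S_r(t)}{N_r(t)} - (\mu_r+\delta_r) - \tfrac12\sigma_8^2\right)dt + \sigma_8\, dB_8(t).
\]

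The key algebraic step is to rewrite $S_r/N_r$ in a form that exposes $I_r$. Since $N_r=S_r+I_r$, we have $S_r/N_r = 1 - I_r/N_r$, so the drift becomes $a - \eta_3\,I_r/N_r$ with $a:=\eta_3-\mu_r-\delta_r-\tfrac12\sigma_8^2$. I would then use the standing lower bound $N_r(t)\ge\underline N_r$ together with $I_r\ge0$ to obtain the pointwise estimate $-\eta_3\,I_r/N_r \ge -(\eta_3/\underline N_r)\,I_r$. Integrating from $0$ to $t$ yields
\[
\log I_r(t) \ge \log I_r(0) + a\,t - \frac{\eta_3}{\underline N_r}\int_0^t I_r(s)\,ds + \sigma_8 B_8(t).
\]

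This is exactly the hypothesis of Lemma~\ref{Lemma5.1.} with $f=I_r$, $\lambda=a$, $\lambda_0=\eta_3/\underline N_r$, and remainder $F(t)=\log I_r(0)+\sigma_8 B_8(t)$. To invoke the lemma I must check $F(t)/t\to0$ a.s., which follows immediately from the strong law of large numbers for Brownian motion, $B_8(t)/t\to0$ a.s. The lemma then gives $\liminf_{t\to\infty}\frac1t\int_0^t I_r(s)\,ds \ge \lambda/\lambda_0 = \underline N_r\,a/\eta_3$, which is precisely the claimed bound, strictly positive by the assumption $a>0$.

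I expect no serious obstacle: this is a clean single-compartment estimate that decouples from the human dynamics, since the $I_r$ equation is autonomous in $(S_r,I_r)$ and unaffected by the Hawkes terms. The only point demanding care is the directional correctness when replacing $N_r$ by $\underline N_r$ inside $-\eta_3\,I_r/N_r$; the substitution produces a valid lower bound for $\log I_r(t)$ precisely because this feedback term is negative, so enlarging the denominator via $N_r\ge\underline N_r$ makes the term less negative. A secondary subtlety, inherited as a hypothesis rather than proved here, is the uniform positivity $N_r(t)\ge\underline N_r>0$ a.s.; establishing such a bound for the total rodent population would in general require a separate argument, but in this statement it is assumed.
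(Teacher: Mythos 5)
Your proposal is correct and follows essentially the same route as the paper's proof: It\^o's formula applied to $\log I_r$, the identity $S_r/N_r = 1 - I_r/N_r$, the bound $-\eta_3 I_r/N_r \ge -(\eta_3/\underline N_r) I_r$ from $N_r \ge \underline N_r$, and then the logarithmic Gronwall-type result (Lemma~\ref{Lemma5.1.}) together with $B_8(t)/t \to 0$ a.s. Your explicit identification of $\lambda = a$, $\lambda_0 = \eta_3/\underline N_r$, and $F(t) = \log I_r(0) + \sigma_8 B_8(t)$ matches the paper's argument exactly, merely stated with slightly more care about the hypotheses of the lemma.
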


\begin{proof}
It\^o for $\ln I_r$ gives
\[
d\ln I_r=\Big[\eta_3\,\tfrac{S_r}{N_r}-\mu_r-\delta_r-\tfrac12\sigma_8^2\Big]dt+\sigma_8\,dB_8.
\]
Since $N_r=S_r+I_r$, $\tfrac{S_r}{N_r}=1-\tfrac{I_r}{N_r}$, it results
\[
\ln I_r(t)=\ln I_r(0)+a t-\eta_3\!\int_0^t\!\frac{I_r}{N_r}\,ds+\int_0^t\sigma_8\,dB_8.
\]
Since $N_r\ge \underline N_r$, we have $-\eta_3\!\int_0^t\!\frac{I_r}{N_r}\,ds\ \ge\ -\frac{\eta_3}{\underline N_r}\!\int_0^t\! I_r(s)\,ds$.
Thus
\[
\ln I_r(t)\ \ge\ C + a t - b \int_0^t I_r(s)\,ds + M_t,
\]
with $b=\eta_3/\underline N_r$ and $M_t=\int_0^t\sigma_8\,dB_8(s)$ satisfying $\underset{t\to\infty}\lim\frac{M_t}{t}=0$ a.s. The standard logarithmic Gronwall-type lemma for semimartingales then yields
$$
\liminf_{t\to\infty}\frac1t\int_0^t I_r(s)\,ds\ \ge\ \frac{a}{b}.
$$
\end{proof}

For the human subsystem, we begin with an eventual lower bound on the susceptible fraction. This mild assumption captures regimes where control measures or demography prevent depletion of susceptibles over long horizons.

\begin{lemma}[Availability of susceptibles from the model equations]\label{avSh} Consider the human subsystem in the compensated Hawkes formulation \eqref{System1}. Assume the standing bounds \eqref{BTP}- \eqref{HR} hold. Let all human parameters $\theta_h,\mu_h,\delta_h,\zeta,\eta_1,\eta_2,p$ and diffusion coefficients $\sigma_1,\sigma_2,\sigma_3$ be constant and nonnegative. Define \[ K^*:=\limsup_{T\to\infty}\ \sup_{0\le s\le T}k(s)\ \in(0,+\infty]\!. \] Then we have the time-average lower bound \begin{equation}\label{intShNhT0} \liminf_{T\to\infty}\ \frac1T\int_0^T\frac{S_h(s)}{N_h(s)}\,ds \ \ \ge\ \ \frac{\theta_h}{\,M\big(\mu_h+(1-p)\big(\eta_2+\eta_1K^*\big)\big)}\,:=\varepsilon_h. \end{equation} In particular, if $K^*<\infty$ then the right-hand side is strictly positive. \end{lemma}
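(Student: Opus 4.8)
The plan is to work directly with the integral form of the $S_h$-equation in \eqref{System1}, time-average it over $[0,T]$, and let $T\to\infty$ so that the boundary term and all martingale contributions wash out, leaving only a balance between recruitment and depletion. Concretely, I would integrate the $S_h$ drift and divide by $T$ to obtain
\[
\frac{S_h(T)-S_h(0)}{T}=\theta_h-\frac1T\int_0^T\Big[(1-p)\tfrac{\eta_1 I_r+\eta_2 I_h}{N_h}+\mu_h\Big]S_h\,ds+\frac1T\int_0^T\!\!\int_{\mathbb R}\epsilon_1(y)\lambda_{s,1}\,m(dy)\,S_h\,ds+\frac{\mathcal M_T}{T},
\]
where $\mathcal M_T$ collects the three Brownian integrals (against $B_1,B_2,B_3$) and the compensated Hawkes integral against $\tilde H_1$. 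Since $0\le S_h\le N_h\le M$ by \eqref{BTP}, the left-hand side tends to $0$ a.s.; the compensator integral is nonnegative (as $\epsilon_1>0$) and will be discarded; so the crux is to show $\mathcal M_T/T\to0$ a.s.

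The decisive algebraic step comes next. Using \eqref{HR} in the sharpened form $k(s)\le K^*$ for every $s$ — valid because $\sup_{0\le s\le T}k(s)$ is nondecreasing in $T$, so its $\limsup$ coincides with its supremum — together with $I_r/N_h\le k(s)\le K^*$, $I_h/N_h\le1$, and $S_h=N_h\cdot(S_h/N_h)\le M\cdot(S_h/N_h)$, I would bound the depletion integrand pointwise by
\[
\Big[(1-p)\tfrac{\eta_1 I_r+\eta_2 I_h}{N_h}+\mu_h\Big]S_h\ \le\ M\big(\mu_h+(1-p)(\eta_2+\eta_1K^*)\big)\,\frac{S_h}{N_h}.
\]
Taking $\liminf_{T\to\infty}$ in the averaged balance and discarding the nonnegative compensator yields $\theta_h\le M\big(\mu_h+(1-p)(\eta_2+\eta_1K^*)\big)\,\liminf_T\tfrac1T\int_0^T\tfrac{S_h}{N_h}\,ds$, which is exactly \eqref{intShNhT0} after dividing by the bracket; strict positivity when $K^*<\infty$ is then immediate.

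The main obstacle is controlling $\mathcal M_T/T$, and especially its compensated-Hawkes component $\int_0^T\!\int_{\mathbb R}\epsilon_1(y)S_h\,\tilde H_1(ds,dy)$. For the three Brownian integrals the integrands are bounded — by $(1-p)\sigma_1K^*M$, $(1-p)\sigma_2M$ and $\sigma_3M$ respectively, again using \eqref{BTP}--\eqref{HR} — so their quadratic variations grow at most linearly in $T$, and the strong law of large numbers for continuous martingales (see \cite{GS}) forces the normalized integrals to $0$ a.s. For the Hawkes part I would estimate the predictable quadratic variation $\int_0^T\!\int_{\mathbb R}\epsilon_1^2(y)S_h^2\lambda_{s,1}\,m(dy)\,ds\le M^2\big(\int_{\mathbb R}\epsilon_1^2\,m(dy)\big)\Lambda_1(T)$, finite by the second-moment hypothesis on $\epsilon_1$, and then invoke \eqref{SLLNC}: subcriticality $\int_0^\infty\nu_1\,ds<1$ makes $\Lambda_1(T)/T$ converge to $\lambda_{0,1}/(1-\int_0^\infty\nu_1)$, so the quadratic variation is again $O(T)$ and the martingale strong law gives $\mathcal M_T/T\to0$ a.s. The same limit also guarantees that the averaged compensator term stays finite, so the balance is not vacuous, completing the argument.
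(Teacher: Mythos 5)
Your proposal is correct and follows essentially the same route as the paper's proof: integrate the $S_h$-equation, time-average, discard the nonnegative Hawkes-compensator term, bound the incidence via \eqref{HR} ($I_r/N_h\le k(s)$, $I_h\le N_h$) and convert to $S_h/N_h$ via \eqref{BTP}, then let the boundary and martingale terms wash out. The only differences are cosmetic — you use the uniform bound $k(s)\le K^*$ (correctly justified by monotonicity of the running supremum) where the paper keeps $K_T$ and takes $\limsup$ at the end, and you spell out the martingale strong-law step (quadratic-variation bounds plus \eqref{SLLNC}) that the paper only invokes implicitly.
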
 \begin{proof} From the $S_h$-equation in \textup{\eqref{System1}}, we have, a.s., \[ \begin{aligned} d S_{h}(t)=&\left(\theta_{h}-(1-p)\cdot\frac{\eta_1I_r(t-)+\eta_2I_h(t-)}{N_h(t-)}S_{h}(t-)-\mu_{h}S_{h}(t-)\right)dt\notag\\ &-(1-p)\dfrac{\sigma_{1}I_{r}(t-)d B_{1}(t)+\sigma_{2}I_{h}(t-)dB_{2}(t)}{N_{h}(t-)}S_{h}(t-)+\sigma_{3}S_{h}(t-)dB_{3}(t)\notag\\ &+\left(\int_{\mathbb{R}}S_{h}(t-)\epsilon_{1}(y)\lambda_{t,1}m(dy)\right)dt
+\int_{\mathbb{R}}\epsilon_{1}(y)S_{h}(t-)\tilde{H}_{1}(dt,dy).
\end{aligned} \] 
Let us integrate from $0$ to $T$, divide by $T$ and rearrange. Using $S_h(T)-S_h(0)=o(T)$ and the strong law for the martingale parts, we obtain \[ \theta_h = \mu_h\,\frac1T\!\int_0^T\! S_h\,ds +(1-p)\,\frac1T\!\int_0^T\!\frac{\eta_1 I_r+\eta_2 I_h}{N_h}\,S_h\,ds -\frac1T\!\int_0^T\!\Big(\!\int_{\mathbb R}\! S_h\,\epsilon_1\lambda_{s,1}m(dy)\Big)ds + o(1). \] Discarding the last (nonnegative) term to get a conservative inequality and bounding the incidence 
using \eqref{HR} and $I_h\le N_h$, we obatin \[ \frac{\eta_1 I_r+\eta_2 I_h}{N_h}\ \le\ \eta_1\,k(s)+\eta_2\ \le\ \eta_1 K_T+\eta_2, \qquad K_T:=\sup_{0\le s\le T}k(s). \] Hence, \[ \theta_h \ \le\ \big(\mu_h+(1-p)(\eta_2+\eta_1K_T)\big)\ \frac1T\!\int_0^T\! S_h(s)\,ds\ +\ o(1). \] Letting $T\to\infty$ and using $K^*=\underset{T\to\infty}{\lim \sup} K_T$, we get \[ \liminf_{T\to\infty}\frac1T\!\int_0^T\! S_h(s)\,ds \ \ \ge\ \ \frac{\theta_h}{\,\mu_h+(1-p)\big(\eta_2+\eta_1K^*\big)}. \] Finally, $N_h\le M$ by \eqref{BTP}, so $S_h/N_h \ge S_h/M$, which yields \[ \liminf_{T\to\infty}\frac1T\!\int_0^T\!\frac{S_h(s)}{N_h(s)}\,ds \ \ge\ \frac1M\ \liminf_{T\to\infty}\frac1T\!\int_0^T\! S_h(s)\,ds \ \ge\ \frac{\theta_h}{\,M\big(\mu_h+(1-p)\big(\eta_2+\eta_1K^*\big)\big)}. \] \end{proof} \begin{remark} The positive compensated Hawkes drift in $dS_h$ (the $\int S_h\,\epsilon_1\lambda_{t,1}m(dy)$ term) was dropped above for a conservative estimate. Keeping it and using $S_h\le N_h\le M$ together with the law of large numbers for Hawkes intensities yields an additive improvement proportional to the stationary mean of $\lambda_{t,1}$. \end{remark} Next, we derive a refined lower bound for the time integral of $S_h/N_h$ by applying the ratio It\^o's formula with Hawkes jump terms. Importantly, the estimate keeps only the negative drift that must be controlled and the positive contributions depending solely on $S_h/N_h$, without assuming a lower bound on $I_h$.

\begin{lemma}[Ratio estimate under a time-averaged susceptible lower bound]\label{taverage} Assume the compensated human subsystem \eqref{System1} with positive constant coefficients. Let the Hawkes channels $i=1,2,3,4$ admit predictable intensities $\lambda_{t,i}$ and suppose $\mathcal{G}_i := \int_{\mathbb R} \epsilon_i(y)\,m(dy)\in(0,\infty)$ with $\epsilon_i(y)\ge0$. Let $\epsilon_h\in(0,1]$ be the constant furnished by Lemma~\ref{avSh} and fix any $\delta\in(0,\epsilon_h)$. Set $a:=\epsilon_h-\delta>0$. Then there exist $T_1\ge T_0$, a finite a.s. constant $\widetilde C_0$, and a square-integrable martingale $\widetilde M_t$ such that for all $t\ge T_1$, a.s.,
\begin{align}\label{average} \int_{T_0}^{t} \frac{S_h(s)}{N_h(s)}\,ds\nonumber& \ge\widetilde C_0 +\frac{1}{\mu_h}\Big( \mu_h a^2 + (a^3-1)\,\sigma_3^2 \Big)(t-T_0) - \frac{\eta_1+\eta_2}{\mu_h}\int_{T_0}^{t} \frac{I_h(s)}{N_h(s)}\,ds\nonumber\\ & - \frac{1}{\mu_h}\sum_{i=2}^4 \mathcal{G}_i\int_{T_0}^{t} \lambda_{s,i}\,ds + \widetilde M_t.
\end{align}
\end{lemma}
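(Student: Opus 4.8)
The plan is to apply the Hawkes--It\^o formula of Proposition~\ref{prop1} to the scale-invariant ratio $S_h/N_h$ (equivalently to $\log(S_h/N_h)$), exploiting the structural cancellations in the aggregated dynamics of $N_h$, and then to integrate and rearrange so that $\int_{T_0}^t S_h/N_h\,ds$ is isolated on the left. First I would derive the equation for $dN_h$ by summing the four human lines of \eqref{System1}: the incidence terms $(1-p)(\eta_1 I_r+\eta_2 I_h)S_h/N_h$ cancel between $dS_h$ and $dI_h$, the transfer terms $\zeta I_h$ and $\gamma_h Q_h$ telescope, and crucially the two $\sigma_2$ cross-diffusion terms cancel, leaving $dN_h$ with drift $\theta_h-\mu_h N_h-\delta_h I_h-(1-\theta)\delta_h Q_h+\sum_{i=1}^4\int X_i\epsilon_i\lambda_{t,i}m(dy)$, a Brownian part, and the four compensated jump integrals $\sum_i\int\epsilon_i X_i\widetilde H_i$. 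This aggregation is what forces channels $i=2,3,4$ (through $I_h,Q_h,R_h\subset N_h$) to enter the final estimate even though only channel $1$ drives $S_h$.

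Second, applying the ratio It\^o formula I would collect the drift of $\log(S_h/N_h)$ into three groups: (i) the diffusion It\^o corrections, which after the $B_3$ bookkeeping produce the $\sigma_3^2$-contributions and, through the quadratic and cubic dependence on $S_h/N_h$ coming from the $N_h^{-2}$ and $N_h^{-3}$ weights, the powers that become $a^2$ and $a^3$; (ii) the incidence term $-(1-p)(\eta_1 I_r+\eta_2 I_h)/N_h$, which I would bound from below using \eqref{HR} and $I_h\le N_h$ by a multiple of $I_h/N_h$ to generate the $-(\eta_1+\eta_2)\int I_h/N_h$ term; and (iii) the jump contributions, where I would pass the logarithmic increments $\log(1+\epsilon_i X_i/N_h)$ to their compensators, use $\log(1+u)\le u$ and $\int\epsilon_i\,m(dy)=\mathcal G_i$, and thereby produce $-\sum_{i=2}^4\mathcal G_i\int\lambda_{s,i}$ up to a nonnegative discarded remainder.

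Third, I would integrate over $[T_0,t]$ and rearrange. The boundary term $\log(S_h(t)/N_h(t))-\log(S_h(T_0)/N_h(T_0))$ is controlled by $S_h\le N_h$, i.e. $\log(S_h/N_h)\le 0$, which is exactly the sign needed to pass to a lower bound for $\int S_h/N_h$; the residual at $T_0$ is absorbed into the a.s.\ finite constant $\widetilde C_0$. The nonlinear diffusion corrections are then handled via Lemma~\ref{avSh}: since $\liminf_{t}\frac1t\int_0^t S_h/N_h\,ds\ge\epsilon_h$ by \eqref{intShNhT0}, there is $T_1\ge T_0$ so that for $t\ge T_1$ the time-average exceeds $a=\epsilon_h-\delta$, and by Jensen's inequality (convexity of $x\mapsto x^k$) the time-averages of $(S_h/N_h)^2$ and $(S_h/N_h)^3$ are bounded below by $a^2$ and $a^3$, which after dividing by $\mu_h$ yields the coefficient $\mu_h a^2+(a^3-1)\sigma_3^2$. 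All Brownian stochastic integrals and the four compensated-Hawkes integrals are gathered into $\widetilde M_t$; its square-integrability follows from $\int_{\mathbb R}\epsilon_i^2\,m(dy)<\infty$ together with the uniform $L^2$-bound on the subcritical intensities ($\int_0^\infty\nu_i<1$, as already used through $\mathrm E[\lambda_{s,i}^2]<\infty$ in Theorem~\ref{Global}), via the isometry behind \eqref{SLLNC}.

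The main obstacle I anticipate is the precise coefficient bookkeeping in the diffusion and jump corrections: matching the $B_3$ It\^o terms and the Jensen step to the exact combination $\mu_h a^2+(a^3-1)\sigma_3^2$, and fixing the sign conventions when passing the logarithmic jump increments to their compensators so that only the nonpositive $-\frac1{\mu_h}\sum_{i=2}^4\mathcal G_i\int\lambda_{s,i}$ survives while the discarded remainder stays nonnegative (this is precisely where the hypothesis $\epsilon_i(y)\ge 0$ is essential). A secondary technical point is justifying that $\widetilde M_t$ is a genuine square-integrable martingale rather than merely a local one, which I would settle by the standard localization along the stopping times $\tau_m$ already introduced in Theorem~\ref{Global}, combined with the finite-second-moment hypotheses on $\epsilon_i$ and $\lambda_{s,i}$.
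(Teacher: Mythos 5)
Your proposal assembles most of the right ingredients, and several of them coincide with the paper's actual proof: the aggregation of the four human equations into $dN_h$ (with the incidence and $\sigma_2$ cancellations), the bound of the incidence term by a multiple of $I_h/N_h$, the treatment of the jump channels (channel $1$ gives a nonnegative increment that can be discarded, channels $2,3,4$ are bounded below through their compensators by $-\mathcal{G}_i\lambda_{s,i}$), and the use of Lemma~\ref{avSh} together with Jensen/Cauchy--Schwarz to convert the time-averages of the squared and cubed ratio into $a^2(t-T_0)$ and $a^3(t-T_0)$. The martingale bookkeeping via localization and the second-moment hypotheses is also consistent with the paper.

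However, the pivot of your argument --- applying It\^o to $\log(S_h/N_h)$ and treating this as ``equivalent'' to working with the ratio --- breaks down, and this is a genuine gap rather than a stylistic difference. In $d\log(S_h/N_h)=d\log S_h-d\log N_h$ the natural-mortality contributions are \emph{constants}: $d\log S_h$ contributes $-\mu_h\,dt$ and $-d\log N_h$ contributes $+\mu_h\,dt$, so they cancel exactly, and the drift of $\log(S_h/N_h)$ contains no term proportional to $S_h/N_h$ at all. Integrating the log dynamics therefore produces an identity in which the quantity $\int_{T_0}^t S_h/N_h\,ds$ never appears, so there is nothing to rearrange onto the left-hand side of \eqref{average}; your remark that $\log(S_h/N_h)\le 0$ has ``exactly the sign needed'' presupposes a $-\mu_h\int S_h/N_h\,ds$ drift term that the log formulation does not generate. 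The paper instead applies It\^o to the ratio $f(x,y)=x/y$ itself: there $\tfrac{1}{N_h}\,dS_h$ supplies the drift term $-\mu_h\,\tfrac{S_h}{N_h}\,dt$ that is moved to the left after integration, the offsetting $+\mu_h\tfrac{S_h}{N_h}$ coming from $-\tfrac{S_h}{N_h^2}\,dN_h$ is bounded below by $\mu_h\big(\tfrac{S_h}{N_h}\big)^2$ (which is the origin of the $\mu_h a^2$ piece of the constant), and the boundary terms are harmless because the ratio is $[0,1]$-valued, whereas $\log\big(S_h(t)/N_h(t)\big)$ is unbounded below. Relatedly, the exact coefficient $\mu_h a^2+(a^3-1)\sigma_3^2$ is tied to the second-order weights of the ratio, $f_{xy}=-1/y^2$ and $f_{yy}=2x/y^3$, which yield $\sigma_3^2\big[(S_h/N_h)^3-(S_h/N_h)^2\big]$; the log computation yields $\tfrac{\sigma_3^2}{2}\big[(S_h/N_h)^2-1\big]$ instead, so the stated constants cannot be recovered along your route. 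To repair the proof, drop the log and carry out the computation for $S_h/N_h$ directly, as in the paper.
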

\begin{proof} By Lemma~\ref{avSh}, for the chosen $\delta\in(0,\epsilon_h)$ there exist $T_1\ge T_0$ and $C_\delta<\infty$ (a.s.) such that, for all $t\ge T_1$, 
\begin{equation}\label{eq:avg-ShNh} \int_{T_0}^{t} \frac{S_h}{N_h}\,ds \;\ge\; a\,(t-T_0)\; -\; C_\delta,\qquad a:=\varepsilon_h-\delta>0. 
\end{equation}
Since $N_h=S_h+I_h+Q_h+R_h$, then
\begin{align} dN_h(t) &=\Bigg[ \theta_h -\mu_h\,N_h(t-) -\delta_h\,I_h(t-) -(1-\theta)\,\delta_h\,Q_h(t-) \notag\\ 
&\qquad +\int_{\mathbb R}\!\epsilon_{1}(y)\,S_h(t-)\,\lambda_{t,1}\,m(dy) +\int_{\mathbb R}\!\epsilon_{2}(y)\,I_h(t-)\,\lambda_{t,2}\,m(dy) \notag\\
&\qquad +\int_{\mathbb R}\!\epsilon_{3}(y)\,Q_h(t-)\,\lambda_{t,3}\,m(dy) +\int_{\mathbb R}\!\epsilon_{4}(y)\,R_h(t-)\,\lambda_{t,4}\,m(dy) \Bigg]dt \notag\\[2mm] 
&\quad -(1-p)\,\frac{\sigma_{1}\,I_r(t-)}{N_h(t-)}\,S_h(t-)\,dB_1(t) +\sigma_{3}\,S_h(t-)\,dB_3(t) +\sigma_{4}\,I_h(t-)\,dB_4(t) \notag\\
&\quad +\sigma_{5}\,Q_h(t-)\,dB_5(t) +\sigma_{6}\,R_h(t-)\,dB_6(t) \notag\\[2mm]
&\quad +\int_{\mathbb R}\!\epsilon_{1}(y)\,S_h(t-)\,\tilde H_1(dt,dy) +\int_{\mathbb R}\!\epsilon_{2}(y)\,I_h(t-)\,\tilde H_2(dt,dy) \notag\\
&\quad +\int_{\mathbb R}\!\epsilon_{3}(y)\,Q_h(t-)\,\tilde H_3(dt,dy) +\int_{\mathbb R}\!\epsilon_{4}(y)\,R_h(t-)\,\tilde H_4(dt,dy).\label{eq27} 
\end{align} 

Apply It\^o's formula for jump-diffusions in the Hawkes-It\^o form (see Eq.~(\ref{Ito1}) in the It\^o formula for Hawkes-It\^o processes) to
$f(x,y)=x/y$ with $x=S_h$ and $y=N_h$ on $[T_0,t]$.
For $f(x,y)=x/y$ one has
\[
f_x=\frac{1}{y},\qquad f_y=-\frac{x}{y^2},\qquad
f_{xx}=0,\qquad f_{xy}=-\frac{1}{y^2},\qquad f_{yy}=\frac{2x}{y^3}.
\]
Hence,
\begin{align}
&d\!\left(\frac{S_h}{N_h}\right)
= \frac{1}{N_h}\,dS_h \;-\; \frac{S_h}{N_h^{2}}\,dN_h
\;+\;\frac12\,f_{yy}(S_h,N_h)\,d\langle N_h^c\rangle_t
\;+\;f_{xy}(S_h,N_h)\,d\langle S_h^c,N_h^c\rangle_t  \nonumber\\
&\quad + \sum_{i=1}^{4}\int_{\mathbb R}
\Bigg[
f\big(S_h(t-)+\Delta_i^{S}(t,y),\,N_h(t-)+\Delta_i^{N}(t,y)\big)
- f\big(S_h(t-),N_h(t-)\big)
\Bigg]\,H_i(dt,dy),\label{eq:ratio-ito}
\end{align}
where $N_h^c$ and $S_h^c$ denote the continuous martingale parts and the jump increments
$(\Delta_i^{S},\Delta_i^{N})$ are given by
\[
(\Delta_i^{S},\Delta_i^{N})=
\begin{cases}
\big(\epsilon_1(y)S_h,\ \epsilon_1(y)S_h\big), & i=1,\\[2pt]
\big(0,\ \epsilon_i(y)X_i\big), & i=2,3,4,
\end{cases}
\qquad
X_2=I_h,\ X_3=Q_h,\ X_4=R_h.
\]
Let us decompose $H_i(dt,dy)=\lambda_{t,i}m(dy)\,dt+\widetilde H_i(dt,dy)$ and set
\[
\Delta^{(i)}_f(t,y):=
f\big(S_h(t-)+\Delta_i^{S}(t,y),\,N_h(t-)+\Delta_i^{N}(t,y)\big)
- f\big(S_h(t-),N_h(t-)\big).
\]
Then the jump contribution in \eqref{eq:ratio-ito} equals
\[
\sum_{i=1}^4\int_{\R}\Delta^{(i)}_f(t,y)\,\lambda_{t,i}m(dy)\,dt
+\sum_{i=1}^4\int_{\R}\Delta^{(i)}_f(t,y)\,\widetilde H_i(dt,dy).
\]
Moreover, writing $r(t):=\frac{S_h(t)}{N_h(t)}\in[0,1]$ and $u_i(t):=\frac{X_i(t)}{N_h(t)}\in[0,1]$, one checks that
\[
\Delta^{(1)}_f(t,y)
=\frac{S_h(1+\epsilon_1(y))}{N_h+\epsilon_1(y)S_h}-\frac{S_h}{N_h}
= r(t-)\,\frac{\epsilon_1(y)\big(1-r(t-)\big)}{1+\epsilon_1(y)r(t-)}\ \ge\ 0,
\]
and for $i=2,3,4$,
\[
\Delta^{(i)}_f(t,y)
=\frac{S_h}{N_h+\epsilon_i(y)X_i}-\frac{S_h}{N_h}
= -\,r(t-)\,\frac{\epsilon_i(y)u_i(t-)}{1+\epsilon_i(y)u_i(t-)}
\ \ge\ -\,\epsilon_i(y).
\]

Collecting the  drift terms in \eqref{eq:ratio-ito} and keeping only the negative contributions that are decisive for a lower bound,
we obtain
\[
\begin{aligned}
d\!\left(\frac{S_h(t)}{N_h(t)}\right)
&\ge \Biggl[
-\mu_h \frac{S_h(t)}{N_h(t)}
-(\eta_1+\eta_2)\frac{I_h(t)}{N_h(t)}
+\sigma_3^{2}\!\left( \left(\frac{S_h(t)}{N_h(t)}\right)^{3} -\left(\frac{S_h(t)}{N_h(t)}\right)^{2} \right)
\Biggr]dt \\
&\quad +\sum_{i=1}^{4}\int_{\mathbb R}\Delta^{(i)}_f(t,y)\,\lambda_{t,i}m(dy)\,dt
+\sum_{i=1}^{4}\int_{\mathbb R}\Delta^{(i)}_f(t,y)\,\widetilde H_i(dt,dy)
\;+\; M_{B}(t),
\end{aligned}
\]
where $M_B(t)$ collects the Brownian local martingale terms.
Integrating over $[T_0,t]$, dividing by $\mu_h>0$ and gathering bounded initial terms into $C_0$, we get
\begin{align}
\int_{T_0}^{t} \frac{S_h}{N_h}\,ds
&\ge C_0
-\frac{\eta_1+\eta_2}{\mu_h}\int_{T_0}^{t}\frac{I_h}{N_h}\,ds
+\frac{\sigma_3^2}{\mu_h}\int_{T_0}^{t}\Big[\Big(\frac{S_h}{N_h}\Big)^3-\Big(\frac{S_h}{N_h}\Big)^2\Big]ds
+\frac{1}{\mu_h}\int_{T_0}^{t}\mu_h\Big(\frac{S_h}{N_h}\Big)^2ds \nonumber\\
&\quad+\frac{1}{\mu_h}\sum_{i=1}^4\int_{T_0}^{t}\!\int_{\R}\Delta^{(i)}_f(s,y)\,\lambda_{s,i}m(dy)\,ds
+\text{martingale}.\label{eq:ratio-raw}
\end{align}
Using the bounds on the true post-jump ratio increments derived above, we have
\begin{equation}\label{eq:jump-bounds}
\int_{\mathbb R}\Delta^{(1)}_f(s,y)\,\lambda_{s,1}m(dy)\ \ge\ 0,\qquad
\int_{\mathbb R}\Delta^{(i)}_f(s,y)\,\lambda_{s,i}m(dy)\ \ge\ -\mathcal G_i\,\lambda_{s,i},\ \ i=2,3,4.
\end{equation}
  By Cauchy-Schwarz, \[ \int_{T_0}^{t} \Big(\frac{S_h(s)}{N_h(s)}\Big)^2 ds\;\ge\; \frac{\big(\int_{T_0}^{t} S_h(s)/N_h(s) ds\big)^2}{t-T_0}. \] By Jensen's inequality for the convex function  on $[0,1]$, \[ \int_{T_0}^{t} \Big(\frac{S_h(s)}{N_h(s)}\Big)^3 ds\;\ge\; (t-T_0)\Big(\frac{1}{t-T_0}\int_{T_0}^{t} \frac{S_h(s)}{N_h(s)} ds\Big)^3. \] Using \eqref{eq:avg-ShNh}, both bounds are linear from below in $(t-T_0)$, namely 
  \begin{equation}\label{eq:power-bounds} \int_{T_0}^{t} \Big(\frac{S_h(s)}{N_h(s)}\Big)^2 ds\;\ge\; a^2 (t-T_0) - C_1,\qquad \int_{T_0}^{t} \Big(\frac{S_h(s)}{N_h(s)}\Big)^3 ds\;\ge\; a^3 (t-T_0) - C_2, 
  \end{equation}
  for some finite a.s. constants $C_1$, $C_2$ (absorbed into $\widetilde C_0$ below). Substitute \eqref{eq:jump-bounds}-\eqref{eq:power-bounds} into \eqref{eq:ratio-raw}, absorb finite a.s. constants into $\widetilde C_0$, and gather the local martingales into $\widetilde M_t$ (square-integrable on finite horizons). This gives exactly \eqref{average}. 
  \end{proof}
  
\begin{theorem}[Human persistence in the mean, constant coefficients]
\label{thm:human-persist-const}
Assume the compensated system \eqref{System1} with constant parameters
$\mu_h>0$, $\delta_h,\zeta,\eta_1,\eta_2,p\ge0$
and volatilities $\sigma_2,\sigma_3,\sigma_4\ge0$.
Assume the standing bounds \eqref{BTP}--\eqref{HR} and that there exists
$\underline N_h>0$ such that $N_h(t)\ge \underline N_h$ a.s.\ for all $t\ge T_0$.
Let the Hawkes channels $i=1,2,3,4$ admit almost surely stationary time averages
$\bar\lambda_i$ and define
\[
{\mathcal G}_i:=\int_{\R}\epsilon_i(y)\,m(dy)\in(0,\infty),
\qquad
L_2:=\int_{\R}\ln(1+\epsilon_2(y))\,m(dy)\in(0,\infty).
\]
Let $\epsilon_h\in(0,1]$ be the constant provided by Lemma~\ref{avSh}.
If
\begin{equation}\label{eq:lambda-h-new}
\begin{aligned}
\lambda_h :=\ &
(1-p)\eta_2
\Bigg[
\frac{\epsilon_h^2\mu_h+(\epsilon_h^3-1)\sigma_3^2}{\mu_h}
-\frac{1}{\mu_h}\sum_{i=2}^4 {\mathcal G}_i\,\bar\lambda_i
\Bigg]
\\[-1mm]
&\quad
-\Big(\mu_h+\delta_h+\zeta
+\tfrac12\big(\sigma_4^2+(1-p)^2\sigma_2^2\big)\Big)
+L_2\,\bar\lambda_2
\;>\;0,
\end{aligned}
\end{equation}
and
\[
\lambda_{0,h}:=\frac{(1-p)\eta_2(\eta_1+\eta_2)}{\mu_h\,\underline N_h},
\]
then the human infection is persistent in the mean, namely
\[
\liminf_{t\to\infty}\frac1t\int_0^t I_h(s)\,ds
\ \ge\ \frac{\lambda_h}{\lambda_{0,h}}
\ >\ 0
\qquad\text{a.s.}
\]
\end{theorem}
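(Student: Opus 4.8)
The plan is to transplant the logarithmic Lyapunov strategy of Theorem~\ref{thm:rodent-persist-noeps} to the compartment $I_h$ and then to feed the resulting drift lower bound into the semimartingale Gronwall Lemma~\ref{Lemma5.1.}. First I would apply the Hawkes--It\^o formula (Proposition~\ref{prop1}) with $\Psi(x)=\log x$ to the $I_h$-equation of \eqref{System1}. Writing the jump part through the compensated measure as in \eqref{Ito2}, the logarithm turns the multiplicative jump $\epsilon_2(y)I_h$ into $\log(1+\epsilon_2(y))$, so the drift of $\log I_h$ reads
\begin{equation*}
\mathcal{L}_s\log I_h=(1-p)\frac{\eta_1 I_r S_h}{N_h I_h}+(1-p)\eta_2\frac{S_h}{N_h}-(\mu_h+\delta_h+\zeta)-\tfrac12\Big((1-p)^2\sigma_2^2\tfrac{S_h^2}{N_h^2}+\sigma_4^2\Big)+\lambda_{s,2}L_2,
\end{equation*}
while the martingale part is $\int_0^t(1-p)\tfrac{\sigma_2 S_h}{N_h}\,dB_2+\sigma_4 B_4(t)+\int_0^t\!\int_{\R}\log(1+\epsilon_2(y))\,\tilde{H}_2(ds,dy)$.

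Next I would lower-bound this drift: the $\eta_1$-term is nonnegative and can be dropped, while $S_h^2/N_h^2\le1$ gives $-\tfrac12(1-p)^2\sigma_2^2 S_h^2/N_h^2\ge-\tfrac12(1-p)^2\sigma_2^2$. Integrating over $[T_0,t]$ then yields
\begin{equation*}
\log I_h(t)\ge C+(1-p)\eta_2\!\int_{T_0}^t\!\frac{S_h}{N_h}ds-\Big(\mu_h+\delta_h+\zeta+\tfrac12(\sigma_4^2+(1-p)^2\sigma_2^2)\Big)(t-T_0)+L_2\!\int_{T_0}^t\!\lambda_{s,2}ds+M(t).
\end{equation*}
The crucial move is to replace the $\int S_h/N_h$ term by its lower bound from Lemma~\ref{taverage}, estimate~\eqref{average}, with $a=\epsilon_h-\delta$. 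This converts the susceptible-fraction integral into an explicit linear-in-$t$ gain $(1-p)\eta_2\mu_h^{-1}(\mu_h a^2+(a^3-1)\sigma_3^2)(t-T_0)$, a Hawkes penalty $-(1-p)\eta_2\mu_h^{-1}\sum_{i=2}^4\mathcal G_i\int_{T_0}^t\lambda_{s,i}\,ds$, and a feedback term $-(1-p)\eta_2\tfrac{\eta_1+\eta_2}{\mu_h}\int_{T_0}^t\frac{I_h}{N_h}\,ds$; using $N_h\ge\underline N_h$ the latter becomes $-\lambda_{0,h}\int_{T_0}^t I_h\,ds$ with $\lambda_{0,h}=(1-p)\eta_2(\eta_1+\eta_2)/(\mu_h\underline N_h)$.

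I would then collect all linear-in-$t$ contributions into a single slope. Replacing the time averages $t^{-1}\int_0^t\lambda_{s,i}\,ds$ by the stationary means $\bar\lambda_i$ via the Hawkes law of large numbers \eqref{SLLNC}, that slope is exactly $\lambda_h(a)$ from \eqref{eq:lambda-h-new} with $\epsilon_h$ replaced by $a$. All remaining contributions are gathered into an $F(t)$ with $F(t)/t\to0$ a.s.: the Brownian stochastic integrals have bounded integrands and are handled as in \eqref{win2}; the compensated Hawkes integral is square-integrable with $\mathbb E[M(t)^2]\le\Lambda_2(t)\int_{\R}\epsilon_2^2\,m(dy)$ and is handled as in \eqref{win3}; the auxiliary martingale $\widetilde M_t$ of Lemma~\ref{taverage} and the bounded boundary terms are absorbed likewise. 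This produces precisely the hypothesis $\log I_h(t)\ge \lambda_h(a)(t-T_0)-\lambda_{0,h}\int_{T_0}^t I_h\,ds+F(t)$ of Lemma~\ref{Lemma5.1.}, whence $\liminf_{t\to\infty}\frac1t\int_0^t I_h\,ds\ge\lambda_h(a)/\lambda_{0,h}$. Since this holds for every $\delta\in(0,\epsilon_h)$ and $a\mapsto\lambda_h(a)$ is continuous and increasing on $(0,1]$, letting $\delta\to0$ sends $a\to\epsilon_h$ and recovers the stated bound $\lambda_h/\lambda_{0,h}>0$.

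The main obstacle will be the bookkeeping in substituting \eqref{average}: one must check that, after multiplying by $(1-p)\eta_2$ and combining with the $\log I_h$ drift, the constant and the slope assemble into exactly $\lambda_{0,h}$ and $\lambda_h$, and that the auxiliary martingale $\widetilde M_t$ inherited from Lemma~\ref{taverage} together with the compensated Hawkes integral are genuinely $o(t)$ a.s.—which relies on the finite-second-moment assumption on $\epsilon_i$ and on the subcriticality $\int_0^\infty\nu_i(s)\,ds<1$ ensuring the convergence of $\Lambda_i(t)/t$. The $\delta\to0$ passage is a secondary but necessary point to replace $a$ by $\epsilon_h$ in the final threshold.
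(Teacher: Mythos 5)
Your proposal follows essentially the same route as the paper's proof: the Hawkes--It\^o formula applied to $\log I_h$, the same drift lower bound (dropping the nonnegative $\eta_1$ term and using $S_h^2/N_h^2\le 1$), substitution of the ratio estimate from Lemma~\ref{taverage}, the Hawkes law of large numbers for the intensity averages, and finally Lemma~\ref{Lemma5.1.}. In fact your write-up is more careful than the paper's, which compresses the final assembly into ``the remainder proceeds as in the original argument''; in particular your explicit $\delta\to 0$ passage (using monotonicity of $a\mapsto \mu_h a^2+(a^3-1)\sigma_3^2$) to replace $a=\epsilon_h-\delta$ by $\epsilon_h$ in the threshold is a step the paper leaves implicit.
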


\begin{proof}
Consider the jump--diffusion dynamics for $I_h$:
\[
\begin{aligned}
dI_h(t)
&=\Bigg[
(1-p)\frac{\eta_1 I_r(t-)+\eta_2 I_h(t-)}{N_h(t-)}S_h(t-)
-(\mu_h+\delta_h+\zeta)I_h(t-)
\\
&\qquad\qquad
+\int_{\R}\epsilon_2(y)I_h(t-)\lambda_{t,2}m(dy)
\Bigg]dt
\\
&\quad
+(1-p)\frac{\sigma_2 S_h(t-)}{N_h(t-)}I_h(t-)\,dB_2(t)
+\sigma_4 I_h(t-)\,dB_4(t)
\\
&\quad
+\int_{\R}\epsilon_2(y)I_h(t-)\,\tilde H_2(dt,dy),
\end{aligned}
\]
which fits the structure of Eq.~\eqref{System1}.
In the notation of \eqref{Ito2}--\eqref{operatorL}, we identify
\[
\alpha_t(x)
=(1-p)\frac{\eta_1I_r(t-)+\eta_2 x}{N_h(t-)}S_h(t-)
-(\mu_h+\delta_h+\zeta)x
+x\!\int_{\R}\epsilon_2(y)\lambda_{t,2}m(dy),
\]
\[
\beta_t(x)=\Big((1-p)\sigma_2\tfrac{S_h(t-)}{N_h(t-)}x,\ \sigma_4 x\Big),
\qquad
\varphi(t,y,x)=\epsilon_2(y)x.
\]

We apply the It\^o formula for Hawkes--It\^o processes
\eqref{Ito2} to the test function $\Psi(x)=\ln x$.
Since
\[
\Psi_x(x)=\frac1x,\qquad
\Psi_{xx}(x)=-\frac1{x^2},\qquad
\Psi_s\equiv0,
\]
the operator $\mathcal L_t$ defined in \eqref{operatorL} becomes
\[
\mathcal L_t\Psi(x)
=
\frac{\alpha_t(x)}{x}
-\frac12\Big((1-p)^2\sigma_2^2\tfrac{S_h^2}{N_h^2}+\sigma_4^2\Big)
+\int_{\R}\ln(1+\epsilon_2(y))\,\lambda_{t,2}m(dy).
\]
Evaluating at $x=I_h(t-)$ and using
\[
\frac{\alpha_t(I_h)}{I_h}
=(1-p)\eta_2\frac{S_h}{N_h}
-(\mu_h+\delta_h+\zeta)
+\int_{\R}\epsilon_2(y)\lambda_{t,2}m(dy)
+(1-p)\eta_1\frac{I_r S_h}{N_h I_h},
\]
we discard the last nonnegative term to obtain the lower bound
\[
\mathcal L_t\Psi(I_h(t-))
\ge
(1-p)\eta_2\frac{S_h}{N_h}
-(\mu_h+\delta_h+\zeta)
-\frac12\Big(\sigma_4^2+(1-p)^2\sigma_2^2\big(\tfrac{S_h}{N_h}\big)^2\Big)
+\int_{\R}\ln(1+\epsilon_2(y))\,\lambda_{t,2}m(dy).
\]
Hence, by \eqref{Ito2}, we obtain the differential inequality
\[
\begin{aligned}
d\ln I_h(t)
&\ge
\Bigg[
(1-p)\eta_2\frac{S_h}{N_h}
-(\mu_h+\delta_h+\zeta)
-\frac12\Big(\sigma_4^2+(1-p)^2\sigma_2^2\big(\tfrac{S_h}{N_h}\big)^2\Big)
\\
&\qquad\qquad
+\int_{\R}\ln(1+\epsilon_2(y))\,\lambda_{t,2}m(dy)
\Bigg]dt
\\
&\quad
+(1-p)\sigma_2\tfrac{S_h}{N_h}\,dB_2(t)
+\sigma_4\,dB_4(t)
+\int_{\R}\ln(1+\epsilon_2(y))\,\tilde H_2(dt,dy).
\end{aligned}
\]
Integrating over $[T_0,t]$ and collecting all martingale terms into
\[
M_t
=(1-p)\int_{T_0}^t\sigma_2\tfrac{S_h}{N_h}\,dB_2
+\int_{T_0}^t\sigma_4\,dB_4
+\int_{T_0}^t\!\int_{\R}\ln(1+\epsilon_2(y))\,\tilde H_2(ds,dy),
\]
we obtain
\begin{align}
\ln I_h(t)
&\ge
\ln I_h(T_0)
+(1-p)\eta_2\int_{T_0}^t\frac{S_h}{N_h}\,ds
-\Big(\mu_h+\delta_h+\zeta
+\tfrac12(\sigma_4^2+(1-p)^2\sigma_2^2)\Big)(t-T_0)
\nonumber\\
&\quad
+\int_{T_0}^t\!\!\int_{\R}
\ln(1+\epsilon_2(y))\,\lambda_{s,2}m(dy)\,ds
+M_t.
\label{eq:logIh-1-new}
\end{align}

The remainder of the proof proceeds exactly as in the original argument.
Combining \eqref{eq:logIh-1-new} with the estimate from Lemma~\ref{taverage},
using the existence of the stationary averages $\bar\lambda_i$,
and applying the logarithmic Gronwall lemma (Lemma~\ref{Lemma5.1.}),
we conclude that
\[
\liminf_{t\to\infty}\frac1t\int_0^t I_h(s)\,ds
\ge
\frac{\lambda_h}{\lambda_{0,h}}
>0
\qquad\text{a.s.}
\]
This completes the proof.
\end{proof}


\textbf{Acknowledgments.}
Olena Tymoshenko acknowledges support from the MSCA4Ukraine project (AvH ID:1233636), which is funded by the European Union. Views and opinions expressed are however those of the author only and do not necessarily reflect those of the European Union. Neither the European Union nor the MSCA4Ukraine Consortium as a whole nor any individual member institutions of the MSCA4Ukraine Consortium can be held responsible for them.
\par
B. Martinucci and N. Giordano are members of the group GNCS of INdAM (Istituto Nazionale di Alta Matematica). 
The work of B. Martinucci is partially supported by MUR-PRIN 2022, project 2022XZSAFN ``Anomalous Phenomena on Regular and Irregular Domains: Approximating Complexity for the Applied Sciences'', and MUR-PRIN 2022 PNRR,  project P2022XSF5H ``Stochastic Models in Biomathematics and Applications''. 


\end{document}